\patchcommand\@starttoc{\begin{quote}}{\end{quote}}
\def\@tocline#1#2#3#4#5#6#7{\relax
  \ifnum #1>\c@tocdepth 
  \else
    \par \addpenalty\@secpenalty\addvspace{#2}%
    \begingroup \hyphenpenalty\@M
    \@ifempty{#4}{%
      \@tempdima\csname r@tocindent\number#1\endcsname\relax
    }{%
      \@tempdima#4\relax
    }%
    \parindent\z@ \leftskip#3\relax \advance\leftskip\@tempdima\relax
    \rightskip\@pnumwidth plus4em \parfillskip-\@pnumwidth
    #5\leavevmode\hskip-\@tempdima
      \ifcase #1
       \or\or \hskip 1em \or \hskip 2em \else \hskip 3em \fi%
      #6\nobreak\relax
    \dotfill\hbox to\@pnumwidth{\@tocpagenum{#7}}\par
    \nobreak
    \endgroup
  \fi}
 \theoremstyle{plain}
 \newtheorem{thm}{Theorem}[section]
 \newtheorem{cor}[thm]{Corollary}
 \newtheorem{lem}[thm]{Lemma}
\theoremstyle{definition}
 \newtheorem{defn}[thm]{Definition}
 \newtheorem{hyp}[thm]{Hypothesis}
\theoremstyle{remark}
 \newtheorem{rem}[thm]{Remark}
 \newtheorem{ter}[thm]{Terminology}
 \newtheorem{nota}[thm]{Notation}
 \newtheorem{exam}[thm]{Example}
 \numberwithin{equation}{section}
\theoremstyle{plain}
\DeclareMathOperator{\VF}{VF}
\DeclareMathOperator{\RV}{RV}
\DeclareMathOperator{\MM}{\gM}
\DeclareMathOperator{\OO}{\gO}
\DeclareMathOperator{\UU}{\mathfrak{U}}
 \DeclareMathOperator{\id}{id}
 \DeclareMathOperator{\Th}{Th}
 \DeclareMathOperator{\pr}{pr}
\DeclareMathOperator{\K}{\mathds{k}}
\DeclareMathOperator{\res}{res}  
\def\XXint#1#2#3{{\setbox0=\hbox{$#1{#2#3}{\int}$}
\vcenter{\hbox{$#2#3$}}\kern-.5\wd0}}
\newcommand{\KK}{\mathds{K}}
\newcommand{\Q}{\mathds{Q}}
\newcommand{\N}{\mathds{N}}
\newcommand{\R}{\mathds{R}}
\newcommand{\omin}{$o$\nobreakdash}
\newcommand{\T}{$T$\nobreakdash}
\newcommand{\dand}{\quad \text{and} \quad}
\newcommand{\gM}{\mathfrak{M}}
\newcommand{\gO}{\mathfrak{O}}
\newcommand{\ga}{\mathfrak{a}}
\newcommand{\gb}{\mathfrak{b}}
\newcommand{\gd}{\mathfrak{d}}
\newcommand{\gu}{\mathfrak{u}}
\newcommand{\0}{\emptyset}
\DeclareMathAlphabet{\mathpzc}{OT1}{pzc}{m}{it}
\providecommand\given{}
\newcommand\SetSymbol[1][]{%
\nonscript \: #1 \vert
\allowbreak
\nonscript\:
\mathopen{}}
\DeclarePairedDelimiterX\set[1]\{\}{%
\renewcommand\given{\SetSymbol[\delimsize]}
#1
}
 \DeclarePairedDelimiter\abs{\lvert}{\rvert}
 \newcommand{\lan}[3]{\mathcal{L}_{#1 \textup{#2} #3}}
\newcommand{\mdl}[1]{\mathcal{#1}}  
\newcommand{\rest}{\upharpoonright}
\newcommand{\fun}{\longrightarrow}
\newcommand{\efun}{\longmapsto}
\newcommand{\sub}{\subseteq}
\newcommand{\mi}{\smallsetminus}
\newbox\gnBoxA
\newdimen\gnCornerHgt
\newdimen\gnArgHgt
\def\code #1{%
        \setbox\gnBoxA=\hbox{$#1$}%
        \gnArgHgt=\ht\gnBoxA%
        \ifnum \gnArgHgt<\gnCornerHgt
                \gnArgHgt=0pt%
        \else
                \advance \gnArgHgt by -\gnCornerHgt%
        \fi
        \raise\gnArgHgt\hbox{$\ulcorner$} \box\gnBoxA %
                \raise\gnArgHgt\hbox{$\urcorner$}}
\DeclareMathOperator{\rv}{rv}
\DeclareMathOperator{\vv}{val}
\DeclareMathOperator{\rad}{rad}
\DeclareMathOperator{\RCF}{RCF}
\DeclareMathOperator{\TCVF}{TCVF}
\newcommand{\LT}{$\lan{T}{}{}$\nobreakdash}
\newcommand{\ddx}{\tfrac{d}{d x}}
\DeclareMathOperator{\abv}{\abs{\vv}}
\newcommand{\ticvf}[1]{\text{T}_{#1}\text{CVF}}
\DeclareMathOperator{\ssin}{\Yright}
\newcommand{\usu}{u \ssin \UU^+}
\newcommand{\cpt}{\complement}
\begin{document}

\title[Approximation in $\TCVF$]{Approximation by $O$-minimal sets in power-bounded $T$-convex valued fields}

\author[Y. Yin]{Yimu Yin}
\address{Pasadena, California}
\email{yimu.yin@hotmail.com}

\thanks{The research leading to the true claims in this paper has been partially supported by  the SYSU grant 11300-18821101}

\keywords{\T-convex valued field, \omin-minimality, power-bounded}

\subjclass[2010]{03C64, 12J10}

\dedicatory{In memory of Jean}

\begin{abstract}
We show that, for a certain large class of power-bounded \omin-minimal $\mathcal{L}_T$-theories $T$ whose field of exponents is infinite-dimensional as a $\Q$-vector space,  any definable set in a \T-convex valued field $(\mathcal{R}, \mathfrak{O})$ is in a precise sense the limit of a family of $\mathcal{L}_T$-definable sets indexed over the residue field. Alternatively, in the mainstream model-theoretic language, this says that if  $(\mathcal{R}', \mathfrak{O}')$ is an elementary substructure of $(\mathcal{R}, \mathfrak{O})$ and if the residue field of $\mathfrak{O}$ contains an element that is infinitesimal relative to the residue field of $\mathfrak{O}'$ then any set $A \sub (\mathcal{R}')^m$ definable in $(\mathcal{R}', \mathfrak{O}')$ is the trace of a set definable in $\mdl R$.
\end{abstract}

\maketitle

\tableofcontents

\section{Introduction}

Let $T$ be a complete \omin-minimal \LT-theory extending the theory $\RCF$ of real closed fields and $\mdl R$ a \T-model. For ease of notation, we will not distinguish a structure from its underlying set  when speaking of definable sets in the former. Recall  that an elementary substructure $\mdl R'$ of $\mdl R$ is \emph{tame} in $\mdl R$ and, dually, $\mdl R$ is a \emph{tame extension} of $\mdl R'$  if, for every $\mdl R'$-bounded $r \in \mdl R$, there is a (necessarily unique) $r' \in \mdl R'$ such that $\abs{r - r'} < \epsilon$ for all positive $\epsilon \in \mdl R'$.

\begin{thm}[\cite{makr:stein:deftype}]\label{makrstein}
Suppose that $\mdl R'$ is tame in $\mdl R$. Then, for any definable set $A \sub \mdl R^m$, its trace $A \cap (\mdl R')^m$ in $\mdl R'$ is definable in $\mdl R'$.
\end{thm}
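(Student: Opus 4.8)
The plan is to establish, following the strategy of \cite{makr:stein:deftype}, the assertion that every type over $\mdl R'$ realized in $\mdl R$ is definable; the theorem follows from this at once. Indeed, given a definable $A \sub \mdl R^m$, write it as $\{\bar a : \mdl R \models \phi(\bar a, \bar b)\}$ for an $\mathcal{L}_T$-formula $\phi(\bar x, \bar y)$ and a tuple $\bar b$ from $\mdl R$, and set $q = \tp(\bar b/\mdl R')$. If $q$ is definable, let $\theta(\bar x)$ be the $\mathcal{L}_T(\mdl R')$-formula defining $q$ relative to $\phi$ (with $\bar x$ on the $\bar a$-side and $\bar y$ on the $\bar b$-side); then for every $\bar a \in (\mdl R')^m$ one has $\bar a \in A$ if and only if $\mdl R' \models \theta(\bar a)$, so $A \cap (\mdl R')^m = \theta(\mdl R')$ is definable in $\mdl R'$. (The reverse implication is equally routine, so the two formulations are in fact equivalent, but only this direction is needed.)

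The one-variable case of the assertion is exactly where the hypothesis enters. By \omin-minimality of $\mdl R'$, a non-algebraic $1$-type $p \in S_1(\mdl R')$ realized by $a \in \mdl R$ is determined by its cut $\{r \in \mdl R' : r < a\}$; being downward closed, this cut is a definable subset of $\mdl R'$ precisely when it is $\emptyset$, all of $\mdl R'$, or of the form $(-\infty, c)$ or $(-\infty, c]$ with $c \in \mdl R'$ --- equivalently, precisely when $a$ is $\mdl R'$-unbounded of one sign or satisfies $\abs{a - c} < \epsilon$ for every positive $\epsilon \in \mdl R'$. That one of these alternatives holds for every $a \in \mdl R$ is exactly the statement that $\mdl R'$ is tame in $\mdl R$. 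Hence every $1$-type over $\mdl R'$ realized in $\mdl R$ is definable.

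It then remains to promote this to arbitrary finite tuples, which I would do by proving, by induction on $\ell$, the statement $(\star_\ell)$: for every $\mathcal{L}_T$-formula $\phi(x_1, \dots, x_\ell, \bar z)$ and every tuple $\bar c$ from $\mdl R$, the trace $\{\bar a \in (\mdl R')^\ell : \mdl R \models \phi(\bar a, \bar c)\}$ is definable in $\mdl R'$ --- note $(\star_\ell)$ for all $\ell$ is the theorem. For $\ell = 1$, $\phi(\mdl R, \bar c)$ is a finite union of points and intervals with endpoints $\gamma \in \mdl R \cup \{\pm\infty\}$; by the previous paragraph each $\tp(\gamma/\mdl R')$ is definable, so each of the sets $\{r \in \mdl R' : r < \gamma\}$, $\{r \in \mdl R' : \gamma < r\}$ is $\mdl R'$-definable, and the trace is the corresponding finite Boolean combination. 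For the inductive step one peels off a coordinate: using \omin-minimal cell decomposition in $\mdl R$ relative to $\bar c$, reduce $\phi(\mdl R, \bar c)$ to a single cell $\{\bar a : \bar a^- \in C^-,\ f(\bar a^-) < a_\ell < g(\bar a^-)\}$ with $C^- \sub \mdl R^{\ell-1}$ a cell and $f, g$ boundary functions, all $\mathcal{L}_T(\bar c)$-definable. By $(\star_{\ell-1})$, $C^- \cap (\mdl R')^{\ell-1}$ is $\mdl R'$-definable, so the task reduces to expressing, $\mdl R'$-definably and uniformly in $\bar a^- \in (\mdl R')^{\ell-1}$, when a given $a_\ell \in \mdl R'$ satisfies $f(\bar a^-) < a_\ell$ --- that is, to knowing, as an $\mdl R'$-definable function of $\bar a^-$, whether $f(\bar a^-)$ is $\mdl R'$-unbounded and, if not, its standard part $\stan(f(\bar a^-))$ together with the sign of $f(\bar a^-) - \stan(f(\bar a^-))$.

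This last point is the main obstacle, and it is where the \omin-minimal machinery (cell decomposition, the monotonicity theorem, uniform finiteness) really does the work. It is delicate for two reasons. First, ``$\stan \circ f$ is $\mdl R'$-definable'' is not a lower instance of $(\star)$ --- it implicitly refers to the convex hull of $\mdl R'$ rather than merely to $\mathcal{L}_T$-structure --- so it must travel as part of a strengthened induction hypothesis rather than be quoted. Second, one cannot sidestep it by enlarging the base: tameness of the pair $(\mdl R', \mdl R)$ need not descend to $\dcl(\mdl R' a_1) \prec \mdl R$ (for $a_1$ unbounded and $s$ a sufficiently generic $\mdl R'$-infinitesimal, $a_1 s$ can realize a non-definable cut over $\dcl(\mdl R' a_1)$), so the induction on $\ell$ must be run over the fixed base $\mdl R'$ with the boundary functions controlled directly. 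Arranging the bookkeeping so that the strengthened hypothesis closes is precisely the core of the Marker--Steinhorn argument; the rest is formal. For later use I would also record the $\TCVF$ reformulation: with $\gO = \{r \in \mdl R : r \text{ is } \mdl R'\text{-bounded}\}$, the pair $(\mdl R, \gO)$ is a $T$-convex valued field whose residue map restricts to an $\mathcal{L}_T$-isomorphism $\mdl R' \to \mathbf{k}$, and the theorem says that traces on $\mathbf{k}$ of $\mathcal{L}_T$-definable subsets of $\mdl R^m$ are definable in $\mathbf{k}$ as a pure \T-model --- but since $\res$ has no $\mathcal{L}_T(\gO)$-definable section, this repackaging does not by itself shortcut the argument above.
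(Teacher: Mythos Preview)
The paper does not prove this theorem; it is quoted from \cite{makr:stein:deftype}, followed immediately by the remark that, since $T$ extends $\RCF$, there is a much shorter proof via \T-convexity due to Tressl \cite{tressl:marste}. Your outline reproduces the original Marker--Steinhorn strategy --- definability of realized $1$-types from tameness, then induction on tuple length via cell decomposition --- and you locate the genuine difficulty accurately: showing that $\stan \circ f$ is $\mdl R'$-definable uniformly in the base point is the crux, and you explicitly defer this to \cite{makr:stein:deftype} rather than carry it out. As a plan this is sound, but it is a sketch of the cited argument rather than an independent proof.

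Your closing paragraph deserves a second look in light of the paper's remark. You are right that the naive transport along $\res \rest \mdl R' : \mdl R' \to \K$ does not by itself yield the result, for the reason you name. But the paper explicitly points to \cite{tressl:marste} as a genuine shortcut, so the \T-convex reformulation is not a dead end: Tressl combines it with the structure theory of \cite{DriesLew95, Dries:tcon:97} --- quantifier elimination for $T_{\textup{convex}}$ and stable embeddedness of $\K$ with its pure $T$-structure --- to bypass the delicate induction entirely. Since the present paper only quotes the theorem as background, either route is acceptable here, but you should not leave the impression that the valuation-theoretic approach is fruitless when the paper itself advertises it as the preferred one.
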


Since $T$ is assumed to be an extension of $\RCF$, not just any \omin-minimal theory, there is a proof of this result, utilizing the theory of \T-convexity as developed in  \cite{DriesLew95}, that is much shorter than the original one in \cite{makr:stein:deftype}; see \cite{tressl:marste}.

Recall that a nonempty proper convex subring $\OO$ of $\mdl R$ is \emph{\T-convex} if for every $\0$-definable continuous function $f : \mdl R \fun \mdl R$, we have $f(\OO) \sub \OO$. Note that the convexity of $\OO$ implies that $\OO$ is a valuation ring. More intuitively, if $\mdl R$ extends the real field $\R$ and $\OO$ is the convex hull of $\R$ in $\mdl R$ then  $\OO$ being \T-convex just means that  no $\0$-definable continuous function can grow so fast as to  stretch the standard reals into infinity. According to \cite{DriesLew95}, the theory $T_{\textup{convex}}$ of the pair $(\mdl R, \OO)$ with $\OO$ a \T-convex subring of $\mdl R$, suitably axiomatized in the language $\lan{}{convex}{}$ that extends $\lan{T}{}{}$ with a new unary relation symbol, is complete.

Let $(\mdl R, \OO) \models T_{\textup{convex}}$. Now, if $\mdl R'$ is not tame in $\mdl R$ then the trace in $\mdl R'$ of an \LT-definable set in $\mdl R$ may not be a definable set in $\mdl R'$. For instance, if $(\mdl R', \OO') \preccurlyeq (\mdl R, \OO)$ then $\OO'$ could be the trace of an interval. This suggests the question: Is it always the case that the trace in $\mdl R'$ of an \LT-definable set in $\mdl R$ a definable set in $(\mdl R', \OO')$? Unfortunately, it is easy to conceive  a counterexample as follows. Suppose that $T$ is polynomially bounded (or, more generally, power-bounded). Then every subset of $\mdl R'$ that is definable in $(\mdl R', \OO')$ is a boolean combination of intervals and valuative discs (see Remark~\ref{HNF}). It follows that if there is a descending sequence of valuative discs in $(\mdl R', \OO')$ whose intersection is empty in $\mdl R'$ but does contain a point $a \in \mdl R \mi \mdl R'$ then the trace in $\mdl R'$ of the interval $(0, a) \sub \mdl R$ cannot be   definable in $(\mdl R', \OO')$.

One may of course search a condition on the pair $(\mdl R', \mdl R)$, much like the tameness requirement in Theorem~\ref{makrstein}, that makes the trace-is-definable property holds. However, in this paper, our goal is to investigate  the dual question: Under what circumstances is every definable set in $(\mdl R', \OO')$ the trace of an \LT-definable set in $\mdl R$?

For simplicity, in the remainder of this introduction, we assume $T = \RCF$ and the structure $(\mdl R, \OO)$ is sufficiently saturated. Sets definable in $\mdl R$ are also called semialgebraic in the literature of real geometry and topology. If a  set $S$ is equipped with a total ordering and a distinguished element $e$ then it makes sense to speak of the positive and the negative parts of $S$ relative to $e$ (excluding $e$ itself), which we denote  by $S^+$ and $S^-$, respectively.

Write  $\R$ as $\bar \R$ to emphasize that it is a model of $\RCF$. By Wilkie's Theorem, the theory of the structure $\tilde{\mdl A}_0 = (\bar \R, \exp)$ is \omin-minimal. Let $\tilde{\mdl A}_i = (\mdl A_i, \exp)$, $i > 0$, be a sequence of ascending proper elementary extensions of $\tilde{\mdl A}_0$. Let $a_i \in \tilde{\mdl A}_i^+$ with $a_i > \tilde{\mdl A}_{i-1}$ (take $a_0 = 1$) and $\zeta_i : \tilde{\mdl A}_i^+ \fun \tilde{\mdl A}_i^+$ be the definable function given by $x \efun x^{a_i} \coloneqq \exp(a_i \log x)$ on $\tilde{\mdl A}_i^+$ and $0$ everywhere else. Thus $\zeta_i$ may be understood as an ``infinitely powered'' monomial relative to the structures below $\tilde{\mdl A}_i$.  Let $T_i$ be the theory of $(\mdl A_i, \zeta_1, \ldots, \zeta_i)$, which is power-bounded, and $T_\flat  = \bigcup_i T_i$. If $a$ is an element in a model of $T_\flat$ then we write $a_\zeta$ for the sequence $(\zeta_i(a))_i$.

\begin{thm}\label{main:trace}
Suppose that $\mdl R$ expands to a $T_\flat$-model $\mdl R_\flat$ and $\OO_\flat$ is a $T_\flat$-convex subring of $\mdl R_\flat$. Let $(\mdl R'_\flat, \OO'_\flat)$ be an elementary substructure of $(\mdl R_\flat, \OO_\flat)$. Suppose that there exists a $u \in \OO^+_\flat$ with $u > \OO'_\flat$. Then every subset of $(\mdl R'_\flat)^m$ that is $a$-definable in $(\mdl R'_\flat, \OO'_\flat)$  is the trace of a subset of $\mdl R^m$ that is $(a, u_\zeta)$-definable in $\mdl R$.
\end{thm}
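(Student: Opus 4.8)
The plan is to reduce the statement to a concrete ``approximation'' mechanism: given an $a$-definable set $A' \sub (\mdl R'_\flat)^m$, produce a single $(a,u_\zeta)$-definable set $A \sub \mdl R^m$ whose intersection with $(\mdl R'_\flat)^m$ is exactly $A'$. First I would use the cell decomposition / quantifier elimination available for $T_\flat$-convex valued fields (in the style of \cite{DriesLew95}) to reduce to the case $m=1$ and, further, to the case where $A'$ is one of the ``building blocks'': an interval with endpoints in $\mdl R'_\flat \cup \{\pm\infty\}$, or a valuative disc, or more generally a piece cut out by conditions of the form $\vv(x-c') \bowtie \vv(d')$ and $\rv$-conditions, with parameters $c', d' \in \dcl(a)$ inside $\mdl R'_\flat$. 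The point of the hypothesis that $T$ has infinite-dimensional field of exponents — realized concretely by the tower $T_i \subset T_{i+1}$ and the ``infinitely powered monomials'' $\zeta_i$ — is that the maps $\zeta_i$ give us, over the larger field, definable functions that behave on $u$ like arbitrarily high powers relative to $\mdl R'_\flat$; this is what will let a single interval $(0,a) \sub \mdl R$ (or rather a carefully chosen $(a,u_\zeta)$-definable set) have prescribed valuative behavior and so realize a descending sequence of discs in the cut.

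The key steps, in order, would be: (1) Set up the reduction via Remark~\ref{HNF} and cell decomposition so that it suffices to approximate each one-variable building block, uniformly in the cell parameters; here one must be careful that the parameters used to name the block all lie in $\dcl_{(\mdl R'_\flat,\OO'_\flat)}(a)$, i.e. are $a$-definable in the \emph{small} structure. (2) Identify the invariant of an $a$-definable subset $X' \sub \mdl R'_\flat$ that determines it: for a power-bounded $T$-convex field this is the finite boolean combination of intervals and valuative discs, so $X'$ is pinned down by finitely many endpoints $c'_j \in \mdl R'_\flat$ and finitely many valuative ``radii'', each of which is a cut in the value group $\Gamma'$ of $\mdl R'_\flat$. (3) For each such cut, realize it in $\mdl R$ by an $(a,u_\zeta)$-definable element: this is where $u$ and the sequence $u_\zeta = (\zeta_i(u))_i$ enters — because $u > \mdl R'_\flat$ is a ``generic large'' element of the residue field of $\OO_\flat$, the values $\vv(\zeta_i(u))$ (equivalently $a_i\cdot\vv(u)$ computed in the larger model) form a coinitial/cofinal family realizing the relevant cuts over $\Gamma'$. (4) Assemble: define $A$ by the same boolean formula that defines $A'$ but with each small-model radius replaced by the corresponding $\zeta_i(u)$-expression and each endpoint kept, then verify $A \cap (\mdl R'_\flat)^m = A'$ by checking that for $x \in \mdl R'_\flat$ the new valuative conditions agree with the old ones — which holds precisely because $u$ is infinitesimally close to nothing in $\mdl R'_\flat$ and the $\zeta_i$ separate the scales.

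I expect the main obstacle to be step (3)–(4): showing that a \emph{single} choice of the data $(a, u_\zeta)$ simultaneously realizes all the cuts that can arise, and — harder — that the resulting set's trace is \emph{exactly} $A'$ rather than something slightly larger or smaller. Concretely, one must rule out that the $\zeta_i(u)$-defined conditions, when restricted to $x \in \mdl R'_\flat$, accidentally include or exclude boundary points; this requires a precise understanding of how $\rv$ and $\vv$ of $x - c'$ interact with the ``infinite monomial'' scales, and of the fact that $\mdl R'_\flat$ is relatively algebraically/definably closed in the relevant sorts. A secondary difficulty is the uniformity in (1): cell decomposition produces blocks whose descriptions vary with a parameter ranging over $\mdl R'_\flat$, and one needs the replacement ``small radius $\rightsquigarrow$ $\zeta_i(u)$'' to be performed uniformly in that parameter, which should follow from definability of the cell decomposition in $T_\flat$-convex but must be checked against the specific form of the $\zeta_i$. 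The power-boundedness of $T$ is essential throughout, since it is what guarantees the building blocks are just intervals and discs (no exotic definable sets) — this is exactly the content of Remark~\ref{HNF} that I would invoke rather than reprove.
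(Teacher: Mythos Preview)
Your proposal has a genuine gap, and it lies precisely where you did \emph{not} expect it: the reduction to $m=1$ in step~(1), together with a misidentification of what the $\zeta_i$ are doing in step~(3).

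First, the mechanism in step~(3) is wrong. From $u \in \OO^+_\flat$ and $u > \OO'_\flat \ni 1$ we get $u \in \UU_\flat$, so $\abv(u)=1$; since each $\zeta_i$ is an $\lan{T_i}{}{}$-definable function and $T_i$ is power-bounded, $\zeta_i(u)$ is again a unit (indeed the paper notes that $\zeta_i$ $\res$-contracts to the same function on $\K$). Thus the whole sequence $u_\zeta$ sits at valuation $1$ and realises no cut in $\Gamma'$ whatsoever. What the tower actually supplies is a hierarchy of \emph{growth rates in the residue field}: each $\zeta_i$ is $T_i$-infinite, i.e.\ dominates every $\lan{T_{i-1}}{}{}$-definable function at $+\infty$ (Definition~\ref{alcon}). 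The radii of the discs occurring in an HNF of a one-variable definable set are already definable over $a$; no new value-group element is needed.

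Second, and this is the real obstruction, you cannot reduce to $m=1$. For $m=1$ a single $\zeta_0$ suffices (Example~\ref{exam:approx}), but the introduction explains in detail why the coordinate-by-coordinate construction fails already for $A=\MM\times\OO$: if both factors are approximated using $\zeta_0$, one can produce an $\lan{T}{}{}$-definable cell $E\supseteq A$ (the region under a function like $1/x^2$) which no such box is contained in, because $1/x^2$ decays faster than any $\zeta_0$-based correction can track. The fix is to use $\zeta_1$ in one coordinate, so that it dominates every $\lan{T_0}{}{}$-definable function arising from the other coordinate; in dimension $n$ one needs $\zeta_0,\dots,\zeta_{n-1}$. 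This is exactly the content of the remark after Theorem~\ref{main:trace} that $(\zeta_i(u))_{0\le i<m}$ suffices: the length of the sequence is governed by the ambient dimension, not by any one-dimensional HNF data. Your ``uniformity in the cell parameters'' worry is the right instinct, but the resolution is this growth-rate domination, not bookkeeping of cuts.

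The paper's route is therefore an induction on $n$ rather than a reduction to $n=1$. One defines a \T-approximant of $A$ as a $u_\zeta$-$\lan{T}{}{}$-definable set squeezed between all $\lan{T}{}{}$-definable subsets and supersets of $A$ (Definition~\ref{def:tapp}); the trace property you worry about in step~(4) then falls out in one line (Lemma~\ref{in:out:same} and its Corollary). The work is in constructing the approximant: Lemma~\ref{LT:lim} handles a single quasi-cell by induction on $n$, using $\zeta_{n-1}$ for the last coordinate precisely so that its $\res$-contraction dominates the $\lan{T_{n-2}}{}{}$-definable function $h_\downarrow$ produced by the inductive hypothesis, and Theorem~\ref{approx:ex} glues the pieces via a tailored cell decomposition (the \omin-frame).
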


Note that the trace is given without using the functions $\zeta_i$, $i > 0$. Actually, instead of $u_\zeta$, the sequence $(\zeta_i(u))_{0 \leq i < m}$ suffices for the conclusion.
This result holds for a large class of \omin-minimal theories, see Hypothesis~\ref{hyp:ord} and Remark~\ref{prog:proto}.

So far we have been using the mainstream model-theoretic language to motivate the discussion. But the origin of our inquiry lies in real geometry and topology, in particular in the effort of generalizing a construction in \cite{fichou:shiota:pui}. The overarching question this construction attempts to address may be simply understood as follows: Could there be a (singular) homology theory for the category of sets definable in $(\mdl R, \OO)$?

Let $\MM$ denote the maximal ideal of $\OO$ and $\UU$ the group $\OO \mi \MM = \OO^\times$ of units of $\OO$. Suppose that  $A$ is a definable set in $(\mdl R, \OO)$   of the form $\MM^m \cap f^{-1}(t)$, where $t \in \MM^+$ is sufficiently small and the function $f : \mdl R^m \fun \mdl R$ is definable in $\mdl R$ (one may replace $\mdl R$ with a Nash or definable manifold, but we are not concerned with generality here). A singular homology $\tilde H_*(A)$ is introduced in \cite{fichou:shiota:pui} by considering semialgebraic simplices contained in $A$. Then it is shown that, for all sufficiently small $u \in \UU^+$, $\tilde H_*(A)$ is naturally isomorphic to the homology $H_*(A_u)$ of the semialgebraic set $A_u = [-u, u]^m \cap f^{-1}(t)$; we speak of ``the homology'' of $A_u$ because all the usual ones coincide on (definably) compact semialgebraic sets. Also see the discussion in \cite[\S~5.2]{Fich:Yin:mot} for a decategorified version of this, that is, the Euler characteristic of $A$, which is defined using motivic integration, is equal to that of $A_u$. The moral to be drawn  here is that $A_u$ is indistinguishable from $A$ relative to the semialgebraic sets of interest and hence, retroactively, we may define $\tilde H_*(A)$ directly as $H_*(A_u)$.

Let $(\mdl R', \OO') \preccurlyeq (\mdl R, \OO)$ and suppose that $A$ is an $\mdl R'$-definable set in  $(\mdl R, \OO)$.  The idea now is to find a semialgebraic set $P$ in $\mdl R$ that behaves sufficiently like $A$ relative to the $\mdl R'$-definable  semialgebraic sets, and then use $P$ to construct some sort of semialgebraic homology of $A$.  Of course, in this endeavor, we need to first make precise what ``sufficiently like $A$'' means. In light of Remark~\ref{HNF}, this can be achieved by piecing together suitable intervals if $A \sub \mdl R$, see Example~\ref{exam:approx} below, which may be easily generalized to certain special subsets of $\mdl R^n$, including those of the form discussed above. However, we should not expect it to work for $\mdl R'$-definable sets in $(\mdl R, \OO)$ in higher dimensions. Here is why.

Consider the set $A = \MM \times \OO$. Naively, we may wish to find a semialgebraic sequence of rectangles $B_a \times C_a$, $a \in \mdl R^+$, such that, say, $A = \bigcup_{a \in \MM^+} B_a \times C_a$. In other words, $B_a$ and $C_a$, $a \in \mdl R^+$, are semialgebraic sequences of intervals that approach $\MM$ and $\OO$ from within, respectively, as $a$ gets larger and larger inside $\MM^+$. Moreover, and this is the crucial point,  they approach their targets at comparable rates, which just means that there is a semialgebraic function $s: \mdl R \fun \mdl R$ such that $s(\MM^+)$ is cofinal in $\OO^+$ and $A = \bigcup_{a \in \MM^+} B_a \times C_{s(a)}$. Then we declare that $B_a \times C_{s(a)}$ is sufficiently like $A$ for all sufficiently large $a$ in $\MM^+$. However, by a curious fact in the theory of \T-convexity (see Lemma~\ref{Ocon}), such a function $s$ cannot exist; here we need to require that $T$ be power-bounded. Dually,  $A = \bigcap_{a \in \MM^+} B_a \times C_{s(a)}$ is also not possible.

We try another construction. Surely it is possible that $B_a$ approaches $\MM$ from  outside and $C_a$ approaches $\OO$ from within at comparable rates, and hence $B_a \times C_{1/a}$ does approach $\MM \times \OO$ as  $a$ gets smaller and smaller in $\UU^+$. The index set $\UU^+$ is a more natural choice than $\MM^+$ for reasons that will become clear in future applications (to begin with, the boxes $B_a \times C_{1/a}$ may be thought of as indexed over the residue field $\K$ of $\OO$, which is also a model of $\RCF$). However,  in this case $A$ cannot be written simply as a union or an intersection. Thus, to get at what  ``sufficiently like $A$'' means, it is natural to ask a different question:  Is it true that, for sufficiently small $a \in \UU^+$ and $\mdl R'$-definable semialgebraic set  $A'$, if $A' \sub A$  then $A' \sub B_a \times C_{1/a}$ and, dually, if $A \sub A'$ then $B_a \times C_{1/a} \sub A'$?

Unfortunately, this is not possible again. To see it, we first assume that $B_a$, $C_{1/a}$ are just the open intervals delimited by their indices. Let $E'$ be an $\mdl R'$-definable interval containing $\OO$. Let $f : E' \fun \mdl R$ be the continuous function such that $f \rest [-1, 1]$ is the constant function $1$ and, outside $[-1, 1]$, $f(x) = 1 / x^2$. Let $E$ be the cell $(-f, f)_{E'}$. Then $A \sub E$. However, $f$ decreases faster than the function $1/\abs x$ near either end of $\OO$, and hence $B_a \times C_{1/a}$ cannot be contained in $E$ for sufficiently small $a \in \UU^+$. Now observe that we cannot get around this issue by replacing $B_a \times C_{1/a}$ with $B_a \times C_{s(a)}$, where $s: \mdl R \fun \mdl R$ is a semialgebraic function, the reason is simply that no such $s$  can decrease fast enough so to be dominated by all $\mdl R'$-definable semialgebraic functions, unless we pass to an extension of $\RCF$ and add such a function. This is the reason why we have introduced the ``fast'' power function $x^{a_1}$ above. Generally speaking,  such an extension does solve the problem for all $A \sub \mdl R^2$, but if $A \sub \mdl R^3$ then  we are back to square one and hence need to add an even ``faster'' power function $x^{a_2}$, and so on.

On second thought, in the first construction above, what if the intervals $C_a$ in the union are indexed over the more natural set $\UU^+$ instead of $\MM^+$? Let us test this modification on the complement $A^\cpt$ of $A$. Consider its partition into the three sets $\MM^\cpt \times \OO^\cpt$, $\MM^\cpt \times \OO$, and $\MM \times \OO^\cpt$. Let $B_{1/r} \times C_{1/s}$, $(r,s) \in (\mdl R^+)^2$, be a semialgebraic family of rectangles (centered at $+\infty$) that approach $\MM^\cpt \times \OO^\cpt$ from within as $r \in \UU^+$ and  $s \in \MM^+$ become larger and larger. Similarly, we construct rectangles $B_{1/r} \times B^\cpt_{r}$, $r \in \mdl R^+$, for $\MM^\cpt \times \OO$ and rectangles $C^\cpt_{s} \times C_{1/s}$, $s \in \mdl R^+$, for $\MM \times \OO^\cpt$. For each $(r, s) \in \UU^+ \times \MM^+$, let
\[
BC(r, s) = (B_{1/r} \times C_{1/s}) \cup (B_{1/r} \times B^\cpt_{r}) \cup (C^\cpt_{s} \times C_{1/s}).
\]
If we wish to declare that some $BC(r, s)$ is sufficiently like $A$ then it is reasonable to demand that it contains all the $\mdl R'$-definable  semialgebraic sets $D$ contained in $A^\cpt$. But this is untenable. To see it, suppose for contradiction that, for all sufficiently large $r \in \UU^+$ and  $s \in \MM^+$, $BC(r, s)$ contains all such $D$. For all $(r, s) \in \UU^+ \times \MM^+$, we have
\[
B_{1/r} \times C_{1/s} \sub B_{1/r} \times B_{r} \dand C^\cpt_{s} \times C_{1/s} \sub B^\cpt_{1/r} \times B_{r}.
\]
For  $r \in \UU^+$, let
\[
B(r) =  (B_{1/r} \times B_{r}) \cup (B_{1/r} \times B^\cpt_{r}) \cup (B^\cpt_{1/r} \times B_{r}),
\]
that is, $B(r) = (B^\cpt_{1/r} \times B^\cpt_{r})^\cpt$. So $BC(r, s) \sub B(r)$ for all $(r, s) \in \UU^+ \times \MM^+$. So, for all sufficiently large $r \in \UU^+$ and all  $\mdl R'$-definable  semialgebraic set $D \sub A^\cpt$,  we have $D \sub B(r)$, or equivalently, $B^\cpt_{1/r} \times B^\cpt_{r} \sub D^\cpt$. Since $A \sub D^\cpt$, this contradicts the discussion in the last paragraph.

In conclusion, for any semialgebraic set $P$ in $\mdl R$, we propose to define ``$P$ is sufficiently like $A$'' or ``$P$ is an approximant of $A$'' to mean that, for all $\mdl R'$-definable semialgebraic sets $A'$, if $A' \sub A$ then $A' \sub P$ and if $A \sub A'$ then $P \sub A'$. Let $(\mdl R_\flat, \OO_\flat)$, $(\mdl R'_\flat, \OO'_\flat)$ be as in Theorem~\ref{main:trace}. Then an ``approximation of $A$''  is a sequence $(P_a)_{a \in \mdl R_\flat^+}$ definable in $\mdl R_\flat$ such that $P_u$ is an approximant of $A$ for every sufficiently small $u \in \UU^+$. It follows from the discussion above that, in general, such an approximation cannot be semialgebraic.

With this terminology, Theorem~\ref{main:trace} may be recast as follows.

\begin{thm}
Every set definable in $(\mdl R_\flat, \OO_\flat)$ admits an approximation.
\end{thm}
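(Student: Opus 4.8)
The plan is to reduce the displayed ``approximation'' statement to Theorem~\ref{main:trace} by unpacking the definition of approximant. Since the final statement says that \emph{every} set $A$ definable in $(\mdl R_\flat, \OO_\flat)$ admits an approximation $(P_a)_{a \in \mdl R_\flat^+}$, and since $A$ is definable over some finite tuple $c$ from $\mdl R_\flat$, I would first note that we may harmlessly replace $(\mdl R'_\flat, \OO'_\flat)$ by a larger elementary substructure still not containing any infinitesimal $u$ over it from $\OO_\flat$ but now containing the parameters $c$ — so that without loss of generality $A$ is $\mdl R'_\flat$-definable and in fact $a$-definable over a suitable base. This is where the saturation hypothesis on $(\mdl R, \OO)$ and the existence of $u \in \OO^+_\flat$ with $u > \OO'_\flat$ are used: the configuration of Theorem~\ref{main:trace} is realized.

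Next I would make precise the bridge between ``$P$ is an approximant of $A$'' and ``$A$ is the trace of $P$''. By definition $P$ is an approximant of $A$ when, for every $\mdl R'_\flat$-definable $A'$, $A' \sub A \Rightarrow A' \sub P$ and $A \sub A' \Rightarrow P \sub A'$. The key observation is that, because $(\mdl R'_\flat,\OO'_\flat)$ is an elementary substructure, $A \cap (\mdl R'_\flat)^m$ is the \emph{smallest} $\mdl R'_\flat$-definable superset among those $A'$ with ... — more precisely, the sets $A'$ with $A' \sub A$ are exactly the $\mdl R'_\flat$-definable subsets of $A \cap (\mdl R'_\flat)^m$, and those with $A \sub A'$ are exactly the $\mdl R'_\flat$-definable supersets of $A \cap (\mdl R'_\flat)^m$; this uses that if $A'$ is $\mdl R'_\flat$-definable then $A' \sub A$ iff $A' \sub A \cap (\mdl R'_\flat)^m$ (as $A'$ is a subset of $(\mdl R'_\flat)^m$). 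Granting this, $P$ is an approximant of $A$ precisely when $P \cap (\mdl R'_\flat)^m = A \cap (\mdl R'_\flat)^m$, i.e.\ $A \cap (\mdl R'_\flat)^m$ is the trace of $P$. (The forward direction is immediate; for the reverse one applies definability of the trace, Theorem~\ref{makrstein}'s analogue or directly the fact that $A \cap (\mdl R'_\flat)^m$ is itself $\mdl R'_\flat$-definable in $(\mdl R'_\flat,\OO'_\flat)$, taking $A' = A \cap (\mdl R'_\flat)^m$.)

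With this reformulation in hand, the construction of the approximation is exactly the content of Theorem~\ref{main:trace}: apply it with $A \cap (\mdl R'_\flat)^m$ in the role of the $a$-definable subset of $(\mdl R'_\flat)^m$ to obtain a subset $P$ of $\mdl R^m$, $(a, u_\zeta)$-definable in $\mdl R$, whose trace in $\mdl R'_\flat$ is $A \cap (\mdl R'_\flat)^m$. One then assembles these into a family $(P_a)_{a \in \mdl R_\flat^+}$ by letting the parameter $u$ range: the proof of Theorem~\ref{main:trace} produces $P$ uniformly in $u$ (it only uses that $u > \OO'_\flat$), so the resulting $P_u$ is an approximant for every sufficiently small $u \in \UU^+$, which is the definition of an approximation. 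A final point is to check definability of the whole family in $\mdl R_\flat$ rather than just pointwise in $\mdl R$ — here the functions $\zeta_i$ are needed to express $u_\zeta$ definably from $u$, which is why the family lives in $\mdl R_\flat$ and not in $\mdl R$.

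The main obstacle is not in this packaging argument but upstream, in Theorem~\ref{main:trace} itself — producing, from an arbitrary $a$-definable set in the valued field $(\mdl R'_\flat,\OO'_\flat)$, an $\mathcal L_T$-definable (semialgebraic-type) set in $\mdl R$ realizing it as a trace. Within the present reduction the only genuine subtlety is verifying the ``smallest superset / largest subset'' characterization of the trace in the non-tame setting: one must make sure that there are enough $\mdl R'_\flat$-definable sets $A'$ pinching $A \cap (\mdl R'_\flat)^m$ from both sides, which is where power-boundedness (via Remark~\ref{HNF}, giving the boolean-combination-of-intervals-and-discs normal form) and the hypothesis on the field of exponents re-enter. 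I expect that step to be short but to require care about parameters.
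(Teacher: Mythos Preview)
Your reduction goes in the wrong direction, and the key equivalence you assert is false.

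You claim that $P$ is an approximant of $A$ precisely when $P\cap(\mdl R'_\flat)^m = A\cap(\mdl R'_\flat)^m$. The forward implication is fine (test against singletons and their complements). The converse fails. Take $A=\MM$ and let $P_0=(-u,u)$ for some $u\ssin\UU^+$; this is a genuine approximant. Now pick any $c\in\mdl R_\flat\smallsetminus\mdl R'_\flat$ with $c>\OO$ and set $P=P_0\cup\{c\}$. Then $P\cap\mdl R'_\flat=\MM'$, yet $\MM\sub(-1,1)$ while $P\not\sub(-1,1)$, so $P$ is not an approximant. The trace condition is strictly weaker than the approximant condition.

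The source of the error is your line ``$A'\sub A$ iff $A'\sub A\cap(\mdl R'_\flat)^m$ (as $A'$ is a subset of $(\mdl R'_\flat)^m$)''. The sets $A'$ in the approximant definition are $\mdl R'_\flat$-definable \emph{semialgebraic} sets, i.e.\ $\lan{T}{}{}$-definable with parameters in $\mdl R'_\flat$; they live in $\mdl R_\flat^m$, not in $(\mdl R'_\flat)^m$. Your proposed converse argument ``take $A'=A\cap(\mdl R'_\flat)^m$'' also fails for the same reason: that set is definable in $(\mdl R'_\flat,\OO'_\flat)$ but is not $\lan{T}{}{}$-definable, so it is not an admissible test set. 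And you cannot repair the argument by elementarity, because the parameter $u_\zeta$ defining $P$ lies outside $\mdl R'_\flat$.

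Consequently Theorem~\ref{main:trace} does not obviously imply the approximation theorem; in the paper the implication runs the other way. The approximation statement is proved directly (this is Theorem~\ref{approx:ex}): one builds, by an explicit inductive construction using the growth representative $(\zeta_i)_i$, an $\lan{T_{n-1}}{}{}$-definable family $P$ whose fibers $P(u)$ are squeezed between all $\lan{T}{}{}$-definable sub- and supersets of $A$. Theorem~\ref{main:trace} is then the easy corollary, via exactly the forward implication you verified. So the ``packaging'' you propose is not available; the work you defer to ``upstream'' is the whole proof.
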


\section{Preliminaries}

We say that a (complete) \omin-minimal theory is \emph{hypogenous} if it   extends the theory $\RCF$ of real closed fields, is power-bounded, is universally axiomatized, and admits  quantifier elimination; of course universal axiomatization and quantifier elimination can always be arranged through definitional extension. We shall adopt the notation and terminology of \cite{Yin:tcon:I} concerning the complete $\lan{T}{RV}{}$-theory $\TCVF$ of \T-convex valued fields (see \cite[\S~2]{Yin:tcon:I}), where the \omin-minimal \LT-theory $T$ is always assumed to be hypogenous; reminders will be provided along the way as we proceed.

The reader is also referred to the opening discussions in \cite{DriesLew95, Dries:tcon:97} for a more detailed introduction to the general theory of \T-convex valued fields and a summary of some fundamental results. Note that, in those papers, how the valuation is expressed is somewhat inconsequential. In contrast, we shall work exclusively with the two-sorted language $\lan{T}{RV}{}$, although the discussion below essentially only concerns definable sets in the valued field itself and hence other languages such as $\lan{}{convex}{}$ are equally effective; see \cite[\S~2]{Yin:tcon:I}, in particular, \cite[Example~2.8]{Yin:tcon:I} for a quick grasp of the central features of this language $\lan{T}{RV}{}$. The main reason for this choice is that such a language is a part of the basic setup for any Hrushovski-Kazhdan style integration, which will play a role in future applications of the result presented here. Informally and for all practical purposes, the language $\lan{T}{RV}{}$ may be viewed as an extension of the language $\lan{}{convex}{}$.

As usual, we work in a sufficiently saturated $\TCVF$-model
\[
\mdl R_{\rv} = (\VF, \RV, <, \rv, \ldots)
\]
with a fixed small substructure $\mdl S$. Note that $\mdl S$ is regarded as a part of the language now and hence, contrary to the usual convention in the model-theoretic literature, ``$\0$-definable'' or ``definable'' in $\mdl R_{\rv}$  means ``$\mdl S$-definable'' instead of ``parametrically definable'' if no other qualifications are given. The reason for fixing the space of parameters, mainly for the construction of motivic integrals, is explained at the beginning of \cite[\S~3]{Yin:tcon:I}.   We also require that $\mdl S$ be $\VF$-generated,  that is, the map $\rv$ is surjective in $\mdl S$,  and  $\Gamma(\mdl S)$ be nontrivial. Since  $\TCVF$ admits quantifier elimination, these two conditions guarantee that $\mdl S$ is an elementary substructure of $\mdl R_{\rv}$ and hence every definable set contains a definable point (see \cite[Hypotheses~4.1 and 5.11]{Yin:tcon:I}).

By a definable set in $\VF$ we mean a definable subset in $\VF$, by which we just mean a subset of $\VF^n$ for some $n$; similarly for other (definable) sorts or even structures in place of $\VF$ that have been clearly understood in the context.



\begin{nota}[Coordinate projections]\label{indexing}
For each $n \in \N$, let $[n]$ denote the set $\{1, \ldots, n\}$. Let $A$ be a definable set in $\VF$. For $E \sub [n]$, we write $\pr_E(A)$, or even $A_E$, for the projection of $A$ into the coordinates contained in $E$. It is often more convenient to use simple standard descriptions as subscripts. For example, if $E$ is a singleton $\{i\}$ then we shall always write $E$ as $i$ and $\tilde E \coloneqq [n] \mi E$ as $\tilde i$; similarly, if $E = [i]$, $\set{k \given i \leq k \leq j }$, $\set{k \given  i < k < j}$, $\{\text{all the coordinates in the sort $S$}\}$, etc., then we may write $\pr_{\leq i}$, $\pr_{[i, j]}$, $A_{(i, j)}$, $A_{S}$, etc.


Unless otherwise specified, by writing $a \in A$ we shall mean that $a$ is a finite tuple of elements (or ``points'') of $A$, whose length is not always indicated.

We shall write $\{t\} \times A$, $\{t\} \cup A$, $A \mi \{t\}$, etc., simply as $t \times A$, $t \cup A$, $A \mi t$, etc., when it is clearly understood that $t$ is an element and hence must be interpreted as a singleton in these expressions.

For $a \in A_{\tilde E}$, the fiber $\set{b \given ( b, a) \in A } \sub A_E$ over $a$ is often denoted by $A_a$. The distinction between the two sets $A_a$ and $A_a \times a$ is often immaterial, in which case they shall be tacitly identified. In particular, given a function $f : A \fun B$ and $b \in B$, the pullback $f^{-1}(b)$ is sometimes just written as $A_b$ as well. This is a special case since functions are identified with their graphs. This notational scheme is especially useful when $f$ has been clearly understood in the context and hence there is no need to spell it out all the time.
\end{nota}

The value group $\Gamma$ of $\mdl R_{\rv}$ is written multiplicatively and the associated valuation map $\vv : \VF^\times \fun \Gamma$ are signed (see \cite[Remark~2.8]{Yin:tcon:I} for an explanation), and the traditional valuation map is written as $\abv: \VF^\times \fun \abs{\Gamma}$; we will have no use of $\vv$ in this paper, though.  The residue map $\OO \fun \K$ is denoted by $\res$ and is extended by setting $\res(a) = 0$ for all $a \in \VF \mi \OO$.

\begin{lem}\label{Ocon}
Let $f : \OO \fun \VF$ be a definable function. Then for some $\gamma \in \abs{\Gamma} \cup \infty$ and $a \in \OO$ we have $\vv(f(b)) = \gamma$ for all $b > a$ in $\OO$.
\end{lem}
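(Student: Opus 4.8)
The plan is to pass to a sufficiently saturated elementary extension $\mdl R^{*}_{\rv} \succ \mdl R_{\rv}$ and realize the cut at the top of $\OO$: the partial type $p(x)$ over $\mdl R_{\rv}$ asserting $x \in \OO$ together with $x > r$ for every $r \in \OO$ is finitely satisfiable (witnessed by $\max_i r_i + 1$), hence realized by some $\beta \in \OO^{*} \smallsetminus \OO$. The heart of the argument is the claim that $\abv(f(\beta)) \in \abs{\Gamma} \cup \{\infty\}$, i.e. that evaluating $f$ at $\beta$ does not enlarge the value group. Since $f(\beta)$ lies in the definable closure of $\mdl R_{\rv} \cup \{\beta\}$, this follows from the Valuation Property for power-bounded $T$-convex valued fields — the one point where power-boundedness is used — provided $\{\abv(\beta - r) : r \in \mdl R_{\rv}\} \subseteq \abs{\Gamma}$ has no largest element. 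A direct ultrametric computation confirms this: if $r \in \OO$ then $\beta - r$ is a positive element of $\OO^{*}$ lying above all of $\OO$, hence a unit, so $\abv(\beta - r) = 1$; and if $r \notin \OO$ then $\abv(r) > 1 \geq \abv(\beta)$, so $\abv(\beta - r) = \abv(r)$. Thus $\{\abv(\beta - r) : r \in \mdl R_{\rv}\} = \{\gamma \in \abs{\Gamma} : \gamma \geq 1\}$, which has no maximum since $\abs{\Gamma}$ is nontrivial.

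Granting the claim, set $\gamma := \abv(f(\beta)) \in \abs{\Gamma} \cup \{\infty\}$ (with the convention $\abv(0) = \infty$) and suppose toward a contradiction that the conclusion fails. Then the definable set $D := \{b \in \OO : \abv(f(b)) = \gamma\}$ cannot be cofinal in $\OO$: otherwise, being a definable subset of $\VF$, it would by weak o-minimality (Remark~\ref{HNF}: a finite union of convex sets) contain a final segment $\{b \in \OO : b > a\}$ of $\OO$, forcing $\abv(f(b)) = \gamma$ for all $b > a$ in $\OO$. Hence $D$ is bounded above by some $a^{*} \in \OO$, so in $\mdl R_{\rv}$ every $b \in \OO$ with $b > a^{*}$ satisfies $\abv(f(b)) \neq \gamma$. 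All parameters involved ($a^{*}$, $\gamma$, and those of $f$) lie in $\mdl R_{\rv}$, so this sentence transfers to $\mdl R^{*}_{\rv}$; instantiating it at $\beta$, which satisfies $\beta > a^{*}$ because $\beta > \OO \ni a^{*}$, yields $\abv(f(\beta)) \neq \gamma$, contradicting the choice of $\gamma$.

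The main obstacle is the claim in the first paragraph; the only delicate point there is that $f$ is definable in the two-sorted structure $\mdl R_{\rv}$ rather than in the pure $o$-minimal reduct, so one must invoke the Valuation Property in the appropriate form — the classical one of \cite{DriesLew95} combined with the analysis of $\VF$-generated extensions, or its counterpart within the framework of \cite{Yin:tcon:I} — to guarantee that the value group of the extension of $\mdl R_{\rv}$ generated by $\beta$ is still $\abs{\Gamma}$ once $\abv(\beta - \mdl R_{\rv})$ has no maximum. A more elementary route, avoiding the Valuation Property: by the monotonicity theorem for $\TCVF$-definable functions, $b \mapsto \abv(f(b))$ is, on a final segment of $\OO$, either constant — in which case we are done — or strictly monotone; but a strictly monotone definable map from a final segment of $\OO$ into $\abs{\Gamma}$ would be a definable injection of an infinite subset of $\VF$ into $\Gamma$, impossible by the orthogonality of $\VF$ and $\Gamma$. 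Either way, the decisive use of power-boundedness is that a $T$-convex structure cannot stretch a final segment of $\OO$ across an unbounded range of valuations.
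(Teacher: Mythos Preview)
The paper proves this by citing \cite[Proposition~4.2]{Dries:tcon:97}; your first argument is essentially a reconstruction of what lies behind that citation, and it is correct. Two remarks. First, the hypothesis the Valuation Property actually requires is merely that $\abv(\beta-r)\in\abs{\Gamma}$ for all $r\in\VF$, not that the set of such values has no largest element; the latter condition speaks to whether $\beta$ generates a new \emph{residue}, which is beside the point here. Since your ultrametric computation already verifies the former, no harm is done. Second, to pass from ``$f(\beta)\in\dcl(\mdl R_{\rv}\cup\{\beta\})$ in the $\lan{T}{RV}{}$-sense'' to something the Valuation Property (a statement about the \omin-minimal theory $T$) can consume, the cleanest device is Lemma~\ref{fun:suba:fun}: one of the finitely many definable pieces on which $f$ agrees with an $\lan{T}{}{}$-definable function is cofinal in $\OO$, and $\beta$ lies in its extension, so $f(\beta)$ already belongs to the $T$-definable closure of $\VF\cup\{\beta\}$. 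You gesture at this (``combined with the analysis of $\VF$-generated extensions''), but it is worth making explicit.

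Your alternative route via monotonicity and ``orthogonality of $\VF$ and $\Gamma$'' is not more elementary. In $\TCVF$ the fact that no infinite definable subset of $\VF$ injects definably into $\Gamma$ is itself established through the Valuation Property (equivalently HNF, Remark~\ref{HNF}) or through results such as \cite[Proposition~4.2]{Dries:tcon:97}; invoking it here is circular rather than an avoidance of the Valuation Property.
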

\begin{proof}
See \cite[Proposition~4.2]{Dries:tcon:97}.
\end{proof}

Note that this does not hold if $T$ is not power-bounded.

A definable function $f$ is \emph{quasi-\LT-definable} if it is a restriction of an \LT-definable function (with parameters in $\VF(\mdl S)$, of course).

\begin{lem}[{\cite[Lemma~3.3]{Yin:tcon:I}}]\label{fun:suba:fun}
Every definable function $f : \VF^n \fun \VF$ is piecewise quasi-\LT-definable; that is, there are a definable finite partition $A_i$ of $\VF^n$ and \LT-definable functions $f_i: \VF^n \fun \VF$ such that $f \rest A_i = f_i \rest A_i$ for all $i$.
\end{lem}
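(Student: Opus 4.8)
The plan is to derive the lemma from a transfer principle for definable closure: for every finite tuple $a$ from $\VF$, $\dcl(a) \cap \VF \sub \dcl_T(\VF(\mdl S), a)$, where $\dcl$ is computed in $\TCVF$ (hence over $\mdl S$) and $\dcl_T$ is computed in the \omin-minimal structure that $T$ induces on the $\VF$-sort. Granting this, fix a definable $f : \VF^n \fun \VF$ and a point $a \in \VF^n$; then $f(a) \in \dcl_T(\VF(\mdl S), a)$, so some function that is \LT-definable over $\VF(\mdl S)$ agrees with $f$ at $a$. Hence the definable sets on which $f$ coincides with such a function cover $\VF^n$, and since $\mdl R_{\rv}$ is sufficiently saturated a compactness argument (a cover of $\VF^n$ by definable sets indexed over the small set of $\lan{T}{}{}(\VF(\mdl S))$-formulas must have a finite subcover, else a realization of the dual finitely satisfiable type would contradict the transfer principle) yields a finite subcover; disjointifying it produces the partition $A_i$ together with \LT-definable $f_i$ such that $f \rest A_i = f_i \rest A_i$.

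For the transfer principle, fix $a$, put $B = \VF(\mdl S) \cup \{a\}$, take $c \in \dcl(a) \cap \VF$, and suppose toward a contradiction that $c \notin \dcl_T(B)$. By quantifier elimination for $\TCVF$, the formula ``$y = c$'' is, over $B$, equivalent to a quantifier-free $\lan{T}{RV}{}$-formula $\rho(y)$. Inspecting the terms of $\lan{T}{RV}{}$ (see \cite[\S~2]{Yin:tcon:I}), a $\VF$-variable can enter such a term only through \LT-terms on the $\VF$-sort or through $\rv$ applied to such terms; hence $\rho(y)$ is a Boolean combination of finitely many \LT-formulas $\chi_i(y)$ on $\VF$ and of conditions in the $\RV$-sort that depend on $y$ only through $\rv(g_1(y)), \ldots, \rv(g_k(y))$ for certain $\lan{T}{}{}(B)$-definable functions $g_j$. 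Using \omin-minimal cell decomposition for $T$, decompose $\VF$ over $B$ so that every $\chi_i$ has constant truth value on each cell, every $g_j$ is continuous on each cell, and every zero set $\{g_j = 0\}$ is a union of cells. Since $c \notin \dcl_T(B)$, the point $c$ lies in an open cell $I$, and on $I$ each $g_j$ is either identically zero or nowhere zero. For $g_j$ nowhere zero on $I$, the set $\{y \in I : \rv(g_j(y)) = \rv(g_j(c))\}$ is the $g_j$-preimage of the multiplicative coset $g_j(c)(1+\MM)$, which is a convex subset of $\VF$ with no endpoints, hence open in the order topology; so, $g_j$ being continuous, this set is an order-open neighbourhood of $c$ in $I$. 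Intersecting over all $j$ — those with $g_j \equiv 0$ on $I$ contributing nothing — the set $J$ of $y \in I$ having the same $\chi_i$-values and the same $\rv(g_j)$-data as $c$ contains an order-open neighbourhood of $c$, so $J$ is infinite. But every $y \in J$ satisfies each atomic subformula of $\rho$ exactly as $c$ does, hence satisfies $\rho$, hence equals $c$ by the choice of $\rho$, contradicting $\abs{J} > 1$.

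The conceptual heart, and the one real obstacle, is the step showing that the residue–value data $\rv(g_j(\cdot))$ is too coarse to isolate an interior point of an \LT-cell: this rests on the elementary but essential fact that a nonzero $\rv$-fibre $g_j(c)(1+\MM)$ is order-open in $\VF$, so that pulling it back along a continuous \LT-function yields an open — hence infinite — set. Everything else is bookkeeping: one must read off from the description of $\lan{T}{RV}{}$ that no function symbol maps out of the $\RV$-sort into $\VF$, so that the dependence of $\rho$ on $y$ genuinely factors through finitely many \LT-terms and their $\rv$-images; one must check that $\mdl R_{\rv}$ is saturated enough for the covering argument; and the passage from $n = 1$ to general $n$ is the same argument with $a$ an $n$-tuple and the cell decomposition performed in one additional $\VF$-variable, requiring nothing new. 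Note that power-boundedness of $T$ is not used here; it enters only Lemma~\ref{Ocon}, not this lemma.
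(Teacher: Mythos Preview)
The paper does not prove this lemma; it merely records the statement and cites \cite[Lemma~3.3]{Yin:tcon:I} for the proof. Your argument is correct and self-contained: the reduction via compactness to the definable-closure transfer principle $\dcl(a)\cap\VF\subseteq\dcl_T(\VF(\mdl S),a)$, followed by a quantifier-elimination analysis showing that the $\rv$-data in any quantifier-free $\lan{T}{RV}{}$-formula is too coarse to isolate an interior point of an $\lan{T}{}{}$-cell (because nonzero $\rv$-fibres are order-open and the $g_j$ are continuous), is exactly the expected route and goes through as written. Two minor points worth making explicit, though neither is a gap: first, the assumption that $\mdl S$ is $\VF$-generated is what lets you absorb any $\RV$-sort parameters of $\rho$ into the $\VF$-terms $g_j$; second, in the compactness step the index set of candidate $\lan{T}{}{}(\VF(\mdl S))$-functions is small because $\mdl S$ is small, which is what licenses the saturation argument. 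Your closing observation that power-boundedness is not used here is also correct.
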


\begin{defn}[$\vv$-interval]
Let $\ga$, $\gb$ be two (valuative) discs, not necessarily disjoint. The subset $\ga < x < \gb$ of $\VF$, if it is not empty, is called an \emph{open $\vv$-interval} and is denoted by $(\ga, \gb)$, whereas the subset
\[
\set{a \in \VF \given x \leq a \leq y \text{ for some $x \in \ga$ and $y \in \gb$} }
\]
if it is not empty, is called a \emph{closed $\vv$-interval} and is denoted by $[\ga, \gb]$. The other $\vv$-intervals $[\ga, \gb)$, $(-\infty, \gb]$, etc., are defined in the obvious way, where $(-\infty, \gb]$ is a closed (or half-closed) $\vv$-interval that is unbounded from below.

Let $A$ be such a $\vv$-interval. The discs $\ga$, $\gb$ are called the \emph{end-discs} of $A$. If $\ga$, $\gb$ are both points in $\VF$, which may be regarded as closed discs with radius $\infty$, then of course we just say that $A$ is an interval, and if $\ga$, $\gb$ are both $\RV$-discs, that is, discs of the form $\rv^{-1}(t)$, then we say that $A$ is an $\RV$-interval, etc. If $A$ is of the form $(\ga, \gb]$ or $[\gb, \ga)$, where $\ga$ is an open disc and $\gb$ is the smallest closed disc containing $\ga$, then $A$ is called a \emph{half thin annulus}.

Two $\vv$-intervals are \emph{disconnected} if their union is not a $\vv$-interval.
\end{defn}

It is straightforward to check that every $\vv$-interval admits a unique presentation, that is, there is only one way to write it as such (the empty set is not a $\vv$-interval by definition). Thus  we may speak of  the \emph{type} of a $\vv$-interval, which is determined  by the attributes of its two ends --- open or closed or unbounded --- and the attributes of its two end-discs --- open or closed or a point.

Evidently a $\vv$-interval is definable if and only if its end-discs are definable.

A definable disc $\ga$ is a closed $\vv$-interval $[\ga, \ga]$ and may be coded by a definable element in $\VF^3$ whose first coordinate is a definable point in $\ga$, whose second coordinate gives $\rad(\ga) \in \abs{\Gamma} \cup \infty$, the radius of $\ga$, and whose last coordinate is either $1$ or $-1$, according to whether $\ga$ is open or closed. Similarly, a definable $\vv$-interval may be coded by a definable element in $\VF^8$,  which in addition records whether the lower and the higher ends of the $\vv$-interval are open or closed or unbounded.

\begin{rem}\label{HNF}
Since the \omin-minimal theory $T$ is power-bounded, we have an important tool called \emph{Holly normal form} (henceforth abbreviated as HNF), that is, every definable subset of $\VF$ is a unique union of finitely many pairwise disconnected definable $\vv$-intervals. This is obviously a generalization of the \omin-minimal condition. It is equivalent to the so-called valuation property (see \cite[\S~7]{Dries:tcon:97}), for which the polynomially bounded case is first established in \cite[Proposition~9.2]{DriesSpei:2000}  and the general case (power-bounded) in \cite{tyne}.
\end{rem}

\begin{defn}[Orientation of a $\vv$-interval]\label{fun:iota}
Let $A$ be a $\vv$-interval. We assign a number $\iota'(A)$ from the set $\{-1, 0, 1\}$ to the lower end of $A$ as follows. If it is an open end with an open end-disc or a closed end with a closed end-disc then $\iota'(A) = 1$. If the end-disc is a point in $\VF \cup \{\pm \infty \}$ then $\iota'(A) = 0$. In the remaining two cases $\iota'(A) = - 1$. Similarly, we assign such a number $\iota''(A)$ to the higher end of $A$. Let $\iota(A) = \iota'(A)\iota''(A)$. We say that $A$ is \emph{oriented} if $\iota(A) \neq -1$.
\end{defn}

For instance, a  $\vv$-interval $A$ is oriented if it is an open disc or a closed disc or a half thin annulus. More generally, let $\ga$ be an open disc and $\gb$ a closed disc with $\rad(\ga) \geq \rad(\gb)$. We have that if $A$ is of the form $[\ga, \gb]$ then $A = \ga \cup (\ga, \gb]$, and if $A$ is of the form $(\ga, \gb)$ then $A = (\ga, \gd] \cup (\gd, \gb)$, where $\gd$ is the smallest closed disc containing $\ga$, and so on. In fact, every $\vv$-interval is a union of (at most two) oriented $\vv$-intervals. This is a bit tedious to check and is left to the reader.

The following lemma is an analogue of \omin-minimal monotonicity.

\begin{lem}[Monotonicity, {\cite[Corollary~3.4]{Yin:tcon:I}}]\label{mono}
Let $A \sub \VF$ and $f : A \fun \VF$ be a definable function. Then there is a definable finite partition of $A$ into oriented $\vv$-intervals $A_i$ such that every $f \rest A_i$ is quasi-\LT-definable, continuous, and monotone. Consequently, each set $f(A_i)$ is a $\vv$-interval too.
\end{lem}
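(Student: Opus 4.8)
The plan is to bootstrap the classical \omin-minimal monotonicity theorem, applied to an \LT-definable function, through the two structural inputs already at hand: Lemma~\ref{fun:suba:fun}, which turns an arbitrary definable function into a piecewise quasi-\LT-definable one, and HNF (Remark~\ref{HNF}), which is what lets ordinary intervals and $\vv$-intervals be reconciled.

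First I would extend $f$ by $0$ off $A$ and apply Lemma~\ref{fun:suba:fun} to get a definable finite partition of $\VF$ on each piece of which $f$ agrees with an \LT-definable function; intersecting with $A$ and taking a common refinement at the end, it suffices to treat one piece at a time, so we may assume from the outset that $f = g \rest A$ for a single \LT-definable $g : \VF \fun \VF$, whose parameters lie in $\VF(\mdl S)$ and hence are definable. Next, by HNF the set $A$ is a finite union of pairwise disconnected definable $\vv$-intervals, so, splitting again, we may assume that $A$ itself is a definable $\vv$-interval, in particular convex.

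Now I would invoke \omin-minimal monotonicity for the reduct $(\VF, <, \ldots) \models T$ applied to $g$: there are finitely many points $c_1 < \cdots < c_k$ in $\VF$, definable over the parameters of $g$ and hence definable, such that on each of the open intervals delimited by the $c_j$ (and by $\pm\infty$) the function $g$ is continuous and monotone, possibly constant. Intersecting $A$ with these open intervals and with the singletons $\{c_j\}$ yields a definable finite partition of $A$ into convex definable sets; discarding empty ones, each part is a definable $\vv$-interval, since it is the intersection of two definable $\vv$-intervals and a convex definable subset of $\VF$ is a single $\vv$-interval by HNF. On each part $B$ the restriction $f \rest B = g \rest B$ is quasi-\LT-definable, continuous, and monotone, all three properties passing to restrictions. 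Finally I would split each $B$ once more into at most two oriented $\vv$-intervals, as in the remark following Definition~\ref{fun:iota}; restricting $f$ further still keeps it quasi-\LT-definable, continuous, and monotone. This produces the required partition $A_i$ of $A$.

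For the last assertion, fix a part $A_i$. It is convex, and $f \rest A_i$ is definable and continuous, hence has the intermediate value property on $A_i$; therefore $f(A_i)$ is convex, and being definable it is a single $\vv$-interval by HNF. The one non-\omin-minimal ingredient is HNF — which, as Remark~\ref{HNF} records, is exactly where power-boundedness of $T$ enters — and the step most in need of care is the third one, where the \omin-minimal partition into ordinary intervals must be converted into a partition by $\vv$-intervals; this works precisely because the intersection of two definable $\vv$-intervals is again a (possibly empty) definable $\vv$-interval. The remaining bookkeeping, namely that each refinement retains all three properties and that finiteness survives every step, is routine.
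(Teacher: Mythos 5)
Your proof is correct, and it is the natural reconstruction of the argument. The paper itself gives no proof here: Lemma~\ref{mono} is cited directly to \cite[Corollary~3.4]{Yin:tcon:I}, so there is nothing on the page to compare against. But the route you take — extend $f$ by $0$, invoke Lemma~\ref{fun:suba:fun} to reduce to finitely many pieces on which $f$ is a restriction of an \LT-definable $g$, apply classical \omin-minimal monotonicity to $g$ in the $\lan{T}{}{}$-reduct to get definable break points $c_j$, intersect with the $\vv$-intervals of $A$ supplied by HNF, observe that a convex definable subset of $\VF$ is a single $\vv$-interval (again HNF), and finish by splitting each piece into at most two oriented $\vv$-intervals per the remark after Definition~\ref{fun:iota} — is exactly the machinery the paper has assembled, in the order one would expect to use it. The final assertion, that $f(A_i)$ is a $\vv$-interval, follows as you say from the intermediate value property of the continuous \LT-definable $g$ on an interval containing $A_i$, which makes $g(A_i)$ convex definable, hence a $\vv$-interval by HNF. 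The only place I would add a word is the justification of "a convex definable subset of $\VF$ is a single $\vv$-interval": this is because two pairwise disconnected $\vv$-intervals, in the paper's sense, always leave a point of $\VF$ between them uncovered, so a union of two or more cannot be convex — a small argument worth making explicit since HNF as stated only gives the decomposition, not this uniqueness-of-convex-piece consequence.
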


\begin{ter}[Cell]\label{cell:stip}
According to \cite[Definition~4.5]{mac:mar:ste:weako}, the notion of a cell in a weakly \omin-minimal structure is almost the same as that in an \omin-minimal structure except that the bounding functions in the former may take imaginary elements as values (consequently the continuity condition for the bounding functions no longer makes sense and is dropped). The theory $\TCVF$ is  weakly \omin-minimal; for this we actually do not need to assume that $T$ is power-bounded, see \cite[Corollary~3.14]{DriesLew95}.

In our setting, due to the presence of HNF, we may instead require that the bounding functions $\bar f_i$, $\bar g_i$ in each step  of the construction of a cell $A \sub \VF^n$  form a $\vv$-interval $I_a$, all  of the same type, over each point $a \in \pr_{\leq i-1}(A)$ in the obvious sense. Furthermore,  the bounding functions $\bar f_i$, $\bar g_i$ take the form of restrictions of (not necessarily unique) \LT-definable functions $f_i, g_i : \VF^{i-1}  \fun\VF^4$, which determine the type of the $\vv$-intervals in question.  This will be what we mean by a \emph{quasi-cell} below, and accordingly $f_i$, $g_i$ its \emph{upper} and \emph{lower characteristic functions}. A \emph{cell} is a quasi-cell in which all the $\vv$-intervals $I_a$ are oriented.
\end{ter}

Quasi-cell and hence cell decompositions  hold accordingly. This follows from the discussion  above, in particular, HNF and Lemma~\ref{fun:suba:fun}.

Often we will omit the last two coordinates in the target of a characteristic function $f$ of a quasi-cell $A$ and write it more conveniently as a pair of \LT-definable functions $f', f'' : \VF^{i} \fun \VF$ with $f' \leq f''$ such that, for all $a \in \pr_{\leq i}(A)$, $f'(a)$ is contained in the end-disc in question and $\abv(f''(a) - f'(a))$ its radius (if the end in question is unbounded then $f'$, $f''$ are taken to be the empty function). We will then  also refer to $f'$, $f''$ as characteristic functions of $A$.

\begin{lem}\label{vbox}
Let $A \sub \VF^n$ be a definable set. Then there are a quasi-cell decomposition $(A_i)_i$ of $A$ and  \LT-definable bijections $\sigma_i^j : \VF^{j} \fun \VF^j$ with $\sigma_i^j \circ \pr_{\leq j} = \pr_{\leq j} \circ \sigma_i^n$, $1 \leq j \leq n$, such that each $\sigma_i^n(A_i)$ is a $\vv$-box, that is, a set of the form $\prod_j I_j$ with each $I_j$  a $\vv$-interval.
\end{lem}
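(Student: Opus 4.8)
The plan is to induct on the ambient dimension $n$, using quasi-cell decomposition (valid by the discussion following Terminology~\ref{cell:stip}) together with the monotonicity lemma to ``straighten'' the bounding functions coordinate by coordinate. First I would invoke quasi-cell decomposition to write $A$ as a finite disjoint union of quasi-cells, and since the conclusion is about a partition of $A$, it suffices to prove the statement for a single quasi-cell $A$. So assume $A \sub \VF^n$ is a quasi-cell with characteristic functions $f_i', f_i'' : \VF^{i-1} \fun \VF$ (in the convenient pair-of-functions form) controlling the $\vv$-interval $I_a$ over each $a \in \pr_{\leq i-1}(A)$, all of the same type. The base case $n = 1$ is trivial: $A$ is itself a $\vv$-interval, take $\sigma^1 = \id$.

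For the inductive step, consider the last coordinate. Over each $a \in B \coloneqq \pr_{\leq n-1}(A)$, the fiber $A_a$ is the $\vv$-interval $I_a$ with end-discs described by $f_n'(a), f_n''(a)$. The idea is to apply an \LT-definable shear in the last coordinate to make this $\vv$-interval independent of $a$. Concretely, if the lower end is bounded, the map $(y_1, \ldots, y_{n-1}, y_n) \efun (y_1, \ldots, y_{n-1}, y_n - f_n'(y_1, \ldots, y_{n-1}))$ is an \LT-definable bijection of $\VF^n$ commuting with $\pr_{\leq n-1}$ (it is the identity on the first $n-1$ coordinates), and it moves the lower end-disc of each fiber to one centered at $0$. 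One still has to arrange that the radius of the end-disc and the length of the $\vv$-interval do not depend on $a$; here is where one uses that $T$ is power-bounded, via Lemma~\ref{Ocon} applied fiberwise to the \LT-definable functions $a \efun \abv(f_n''(a) - f_n'(a))$ and to the radius function --- but since we are partitioning anyway, I would instead first refine $B$ so that on each piece the relevant valuations are given by a single quasi-\LT-definable monomial-type function (Lemma~\ref{mono}, Lemma~\ref{fun:suba:fun}), and then compose with a further \LT-definable scaling $y_n \efun y_n / m(y_1, \ldots, y_{n-1})$ to normalize. After finitely many such refinements and shears, composed into a single \LT-definable $\sigma^n$, the fiber $A_a = I_n$ becomes a fixed $\vv$-interval independent of $a$, so $\sigma^n(A) = B' \times I_n$ where $B' = \sigma^{n-1}(B)$ for the induced \LT-definable bijection $\sigma^{n-1}$ of $\VF^{n-1}$. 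Now apply the induction hypothesis to $B'$ (a definable subset of $\VF^{n-1}$), partitioning it into quasi-cells carried by \LT-definable bijections onto $\vv$-boxes $\prod_{j \le n-1} I_j$; pulling these back through $\sigma^{n-1}$ and pushing forward, and compatibly extending to the last coordinate (which is untouched), gives the required decomposition of $A$ with $\sigma_i^n$ the composite, $\sigma_i^j$ its truncation to the first $j$ coordinates. The compatibility $\sigma_i^j \circ \pr_{\leq j} = \pr_{\leq j} \circ \sigma_i^n$ holds because every bijection used in the construction is ``triangular'' --- the $j$-th coordinate of the image depends only on the first $j$ input coordinates.

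The main obstacle, and the step that needs care, is the fiberwise normalization of the end-discs: making not just the center but the \emph{radius} of the end-disc, and the overall length and type of the $\vv$-interval $I_a$, constant in $a$ after a single \LT-definable transformation. A priori these data vary with $a$, and one wants to divide by a ``universal'' scaling function; the subtlety is that such a scaling is only quasi-\LT-definable (a restriction of an \LT-definable function), and one must check that the \LT-definable extension still yields a \emph{bijection} of all of $\VF^n$ commuting with the projections --- which is why the statement only asks for \LT-definable $\sigma_i^j$ and tolerates passing to a finer partition. I expect the bookkeeping here to be somewhat tedious but to follow the same pattern as the verification that ``every $\vv$-interval is a union of at most two oriented $\vv$-intervals'' left to the reader above, combined with Lemma~\ref{Ocon} to guarantee eventual constancy of the valuation data on each piece of a suitable partition of the base.
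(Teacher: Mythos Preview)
Your inductive scheme and the use of triangular shear-and-scale bijections are the right framework, and they match the paper's strategy. However, there is a genuine gap in the normalization of the last coordinate.

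The difficulty you flag --- that after shearing by $f_n'$ one must still control the radius data --- is real, but your proposed remedy misses in two ways. First, Lemma~\ref{Ocon} concerns one-variable definable functions on $\OO$; it says nothing about functions on an arbitrary base $B \subseteq \VF^{n-1}$, so the ``fiberwise'' application you sketch is not available, and in any case there is no finite $\lan{T}{}{}$-definable partition of $B$ on which a valuation such as $\abv(f_n''(a) - f_n'(a))$ is constant (already for $B = \VF$ and $f_n'' - f_n' = \id$ this fails). Second, and more to the point, even granting that $y_n \mapsto (y_n - f_n'(a))/\delta(a)$ normalizes the lower end-disc to $\MM$, this affine map will not simultaneously normalize the \emph{other} end-disc of $A_a$ when both ends are genuine discs: the image of the upper end-disc still depends on $a$ through the relative position of the centres and the ratio of the radii. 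Your phrase ``after finitely many such refinements and shears'' glosses over exactly this obstruction.

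The paper sidesteps both issues by a preliminary reduction you omit. After applying the inductive hypothesis to $A' = \pr_{<n}(A)$, it further decomposes $A$ --- splitting each fiber at an interior $\lan{T}{}{}$-definable point supplied by the characteristic functions --- so that \emph{one of the two end-discs of every $A_a$ is a point}, and then translates so that this point becomes $+\infty$. Now only a single end-disc remains, and the map $y_n \mapsto (y_n - f'(a))/(f''(a) - f'(a))$ sends, e.g., $(\ga_a, +\infty)$ to $(\MM, +\infty)$ uniformly in $a$, with no need for the radius to be constant and no appeal to Lemma~\ref{Ocon}: the characteristic function already records an element of the required valuation to scale by.
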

\begin{proof}
We do induction on $n$. The base case $n=1$ is rather trivial.   For the inductive step, upon further decomposition, we may assume that  $A' = \pr_{<n}(A)$ is already a $\vv$-box (the bijection in question is just the identity function on $\VF^{n-1}$) and $A$ is a quasi-cell such that, for each $a \in A'$, one of the end-discs of $A_a$ is a point. Due to the presence of characteristic functions, without loss of generality, we may assume that this endpoint of $A_a$ is in fact $+\infty$. Now there are several cases to consider. Since they are all quite similar, it is enough to deal with the case that each $A_a$ is of the form $(\ga_a, +\infty)$, where $\ga_a$ is an open disc. Then the claim is clear since, using a (lower) characteristic function of $A$ again, we can construct an $a$-\LT-definable bijection between $(\ga_a, +\infty)$ and $(\MM, +\infty)$, uniformly for all $a \in A'$.
\end{proof}

\begin{defn}[$\res$-contractions]\label{defn:corr:cont}
A function $f : A \fun B$ between two sets in $\VF$ is \emph{$\res$-contractible} if there is a (necessarily unique) function $f_{\downarrow} : \res(A) \fun \res(B)$, called the \emph{$\res$-contraction} of $f$, such that $(\res \rest B) \circ f = f_{\downarrow} \circ (\res \rest A)$.
\end{defn}

\section{Approximating a definable set by $o$-minimal sets}
To even state the definition of an approximation, the hypogenous theory $T$ needs to admit an ascending sequence of hypogenous extensions satisfying certain constraints. The precise condition is formulated as follows.

\begin{hyp}\label{hyp:ord}
There is an ascending sequence of hypogenous $\lan{T_i}{}{}$-theories $T_i$, $i \in \N$, with $T_0 = T$ such that, for all $i$,   $x > \KK_i$ for some $x \in \KK_{i+1}$, where $\KK_i$ is the field of exponents of $T_i$.  We call the sequence $(T_i)_i$   a \emph{power progression at $T$}.
\end{hyp}

Here we may write $\KK_i \sub \KK_{i+1}$ for all $i$. Let $\KK = \bigcup_i \KK_i$, $\lan{T_\flat}{}{} = \bigcup_i \lan{T_i}{}{} $, and $T_\flat = \bigcup_i T_i$. It follows that $T_\flat$ is a hypogenous $\lan{T_\flat}{}{}$-theory whose field  of exponents is $\KK =\bigcup_i \KK_i$. Also, if the prime model of $T_i$ is denoted by $\mdl P_i$ then the prime model $\mdl P_\flat$ of $T_\flat$ equals $\bigcup_i \mdl P_i$. Since $\KK$ is cofinal in $\mdl P$, it  follows that the convex hull of $\KK$ in any $T_\flat$-model is $T_\flat$-convex.


There is an ample supply of power progressions. The general procedure for manufacturing such sequences below is based on the results in \cite[\S~6]{fostThesis}, which generalizes the work in \cite{DriesSpei:2000}.

\begin{rem}\label{prog:proto}
Suppose that $T_0$ is a hypogenous $\lan{T_0}{}{}$-theory and $\mdl P_0 = (P_0, <, \ldots)$ is its prime model. Let $\KK_0$ be the field of exponents of $T_0$. Assume that $\KK_0$ is cofinal in $\mdl P_0$ and $\mdl P_0$ defines a  restricted exponential function $E : [-1, 1] \fun P_0$   with $\ddx E(0) = 1$.
For future applications, it is perhaps simpler to just assume that $P_0 = \R$ and $T_0$ contains the theory $T_{\text{an}}$ of the real field with all restricted analytic functions as defined in \cite{DMM94}. Then $\mdl P_0$ can be expanded to an \omin-minimal $\lan{T_0}{}{}(\exp, \log)$-structure $\tilde{\mdl P}_0 = (\mdl P_0, \exp, \log)$, where $\exp$ and $\log$ are new function symbols, such that  $\exp$ is interpreted in  $\tilde{\mdl P}_0$  as the exponential function extending $E$ (characterized by the property $\ddx\exp = \exp$) and $\log$ its inverse. Furthermore, $\Th(\tilde{\mdl P}_0)$ admits quantifier elimination and a universal axiomatization, but we shall not need this.

Let $\tilde{\mdl P}_i = (\mdl P_i, \exp, \log)$, $i \in \N$, be an ascending sequence of elementary extensions of $\tilde{\mdl P}_0$ such that, for each $i$, there is an $a_i \in \tilde{\mdl P}_i^+$ with $a_i > \tilde{\mdl P}_{i-1}$. Let $\tilde{\mdl P}  = (\mdl P, \exp, \log) = \bigcup_i \tilde{\mdl P}_i$ and $x^{a_i} : \tilde{\mdl P}^+ \fun \tilde{\mdl P}^+$ be the function given by $x \efun \exp(a_i \log x)$. We claim that the structure $\mdl A_i = (\mdl P, x^{a_j})_{j \leq i}$ is power-bounded. Suppose for contradiction that this is not the case. Then, by the same argument as in \cite[Example~1.4]{MR1428013}, there would be $k_j \in \KK_0$, $1 \leq j \leq i$, with $0 \ll k_1 \ll \ldots \ll k_i$ (here $a \ll b$ means that  $b$ is sufficiently larger than $a$) such that  the structure $(\mdl P_0, x^{k_1}, \ldots, x^{k_i})$ is exponential, which directly contradicts the assumption that $T_0$ is power-bounded.
After extending by definitions, we obtain a power progression $(\Th(\mdl A_i))_i$ at $T_0$.
\end{rem}

From here on we assume that $\mdl R_{\rv}$ is a sufficiently saturated $\ticvf \flat$-model and $\mdl S$ is a (small) elementary substructure. So, by Hypothesis~\ref{hyp:ord}, every definable set is $\lan{T_i}{RV}{}$-definable for some $i$.


Since a  point in $\VF$ is definable if and only if it is in $\VF(\mdl S)$, such a definable point  is indeed $\lan{T_i}{}{}$-definable for every $i$ (since parameters in $\mdl S$ are allowed). It follows from this and HNF that all the $\lan{T_i}{RV}{}$-reducts of $\mdl R_{\rv}$ have the same definable subsets of $\VF$. This of course is not the case for definable subsets of $\VF^n$ if $n > 1$.

\begin{defn}[Growth representative]\label{alcon}
An $\lan{T_{i}}{}{}$-definable function $f :  (b, +\infty) \fun \VF$  (or rather the corresponding element in the Hardy field associated with the $\lan{T}{}{}$-reduct of $\mdl R_{\rv}$) is  \emph{$T_i$-infinite} and its multiplicative inverse $1 / f$ at $+\infty$ (taken in the Hardy field, to be more precise) \emph{$T_i$-infinitesimal} if it dominates all (parametrically) $\lan{T_{i-1}}{}{}$-definable functions at $+\infty$. For instance,  any power function $x^{a_{i}}$, $a_{i} \in \KK_{i} \mi \KK_{i-1}$, is $T_i$-infinite. We shall also need the fact that if $f$ is $T_i$-infinite then, at $+\infty$, its compositional inverse $f^{-1}$, such as the power function $x^{1/a_{i}}$ if $f$ is $x^{a_{i}}$, dominates all  constant functions and is dominated by every (parametrically) $\lan{T_{i-1}}{}{}$-definable increasing function; such a function  is called \emph{infinite-$T_i$-constant} and its multiplicative inverse \emph{infinitesimal-$T_i$-constant}. More generally, a function $f : (b, +\infty) \fun \VF$ is \emph{$T_i$-infinite at $b$}, etc., if $f \circ \frac 1 {x-b}$ is $T_i$-infinite, etc.

For each $i \geq 0$, let $\zeta_{i} :  \VF \fun \VF$  be a $T_i$-infinite  function given by a function symbol of $\lan{T_{i}}{}{}$ and suppose that $\zeta_{i} \rest \VF^+$ is strictly increasing.  Then the sequence $(\zeta_i)_i$ is called a \emph{growth representative} for  $(T_i)_i$.
\end{defn}

Fix a growth representative $(\zeta_i)_i$  for $(T_i)_i$. Observe that the sequence $(\zeta_i, \zeta_{i+1}, \ldots)$ is a growth representative for the power progression $(T_i, T_{i+1}, \ldots)$. Also, the  $\res$-contraction $\zeta_{i \downarrow} :  \K \fun \K$ of  $\zeta_i$ is just the function  defined by the same function symbol in the $T_i$-model $\K$, at least at $+ \infty$, and hence, in $\K$, $\zeta_{i \downarrow}$ is $T_{i}$-infinite, $1/ \zeta_{i \downarrow}$ is $T_{i}$-infinitesimal, $\zeta_{i \downarrow}^{-1}$ is infinite-$T_{i}$-constant, etc.



For any  $b \in \VF$,   the sequence $(\zeta_0(b), \zeta_1(b), \ldots)$ is denoted by $b_\zeta$. If $\MM < u < \VF^+(\mdl S) \mi \MM$ then we write $u \ssin \UU^+$; such an element exists since  $\mdl R_{\rv}$ is sufficiently saturated.

Let $A \sub \VF^n$ be an $\lan{T}{RV}{}$-definable set. Here is the key definition of the paper.

\begin{defn}[Approximation]\label{def:tapp}
Let $P \sub \VF^{n} \times \VF^+$ be an $\lan{T_i}{}{}$-definable set with $\pr_{>n}(P) = \VF^+$  such that each fiber $P_b$, $b\in \VF^+$, is $b_\zeta$-$\lan{T}{}{}$-definable. We say that $P$ is a \emph{$T_i$-approximation} of $A$ if, for some $u \ssin \UU^+$ (hence for all $u \ssin \UU^+$, see Remark~\ref{some:all} below), the fiber $P_u$ is contained in all $\lan{T}{}{}$-definable sets containing $A$ and contains all $\lan{T}{}{}$-definable sets contained in $A$. Such a $u_\zeta$-$\lan{T}{}{}$-definable set $P_u$ (not necessarily a fiber of some $T_i$-approximation) shall be referred to as a \emph{\T-approximant} of $A$.
\end{defn}

We shall refer to $P$ simply as an approximation of $A$ when there is no need to emphasize that it is $\lan{T_i}{}{}$-definable.


The following  more convenient functional notation for approximations will be used throughout:  $P$ may be  thought of as a function on $\VF^+$ with  $P(b)  = P_{b}$.

\begin{exam}\label{exam:approx}
Let $\ga$ be a definable open disc with $\rad(\ga) = \gamma$, which can also be written as a closed $\vv$-interval $[\ga,\ga]$. We have pointed out above that such a set is actually $\lan{T_i}{RV}{}$-definable for all $i$.  Since $\ga$ contains a definable point, it is definably bijective to $\MM_\gamma$, the open disc of radius $\gamma$ centered at $0$. Let $a \in \VF^+$ be a definable point with $\vv(a) = \gamma$. Then
\[
 \MM_\gamma = \bigcap_{u \in \UU^+} [-a\zeta_0(u), a\zeta_0(u)] = \bigcap_{u \in \UU^+} (-a\zeta_0(u), a\zeta_0(u))
\]
and hence $\ga$ can also be written as an intersection of $\UU$-definable closed or open intervals ordered by inclusion and indexed by $\UU^+$. For  $u \ssin \UU^+$, $[-a\zeta_0(u), a\zeta_0(u)]$ or $(-a\zeta_0(u), a\zeta_0(u))$  is a \T-approximant of $\MM_\gamma$: it is contained in all definable intervals containing $\MM_\gamma$ and contains all definable intervals contained in $\MM_\gamma$ (the second clause is of course trivial, but it will not be in the general case).

Dually, if $\ga$ is a definable closed disc then it is definably bijective to $\OO_\gamma$, the closed disc of radius $\gamma$ centered at $0$, and the latter can be written as a union of the same $\UU$-definable  intervals inversely ordered by inclusion and indexed by $\UU^+$. The sets
\[
 \bigcup_{b \in \VF^+} [-a / \zeta_0(b),  a  / \zeta_0(b)] \times b \dand \bigcup_{b \in \VF^+} (- a / \zeta_0(b),  a / \zeta_0(b)) \times b
\]
are then two approximations of $\OO_\gamma$.
\end{exam}

\begin{rem}\label{some:all}
Let $u', u'' \ssin \UU^+$ and $P(u')$ be a \T-approximant of  $A$. Then, by the argument in the proof of \cite[Theorem~2.16]{Yin:tcon:I}, there is an $\lan{T_\flat}{RV}{}$-automorphism  $\sigma$ of  $\mdl R_{\rv}$ (over $\mdl S$) with $\sigma(u') = u''$ and hence $P(u'')  = \sigma(P(u'))$ is also a  \T-approximant of  $A$. It follows that, for some $i$, there is an $\lan{T_i}{}{}$-formula $\phi(x, y)$ such that, for  $\usu$, $\phi(x, u)$ defines a \T-approximant of  $A$ and hence the set defined by $\phi(x, y)$ is indeed a $T_i$-approximation of $A$.
\end{rem}

\begin{rem}\label{pieapprox}
Let $f : A \fun \VF$ be an $\lan{T}{RV}{}$-definable function. Suppose that $f_1, f_2 : \VF^n \fun \VF$ are two $\lan{T}{}{}$-definable functions that both restrict to $f$ on $A$. Then $(f_1 - f_2)^{-1}(0)$ is an $\lan{T}{}{}$-definable set containing $A$ and hence if  $P(u)$ is a \T-approximant of  $A$ then  $f_1 \rest P(u) = f_2 \rest P(u)$.
\end{rem}

\begin{lem}\label{omin:par}
Let  $(C_i)_i$ be an $\lan{T}{}{}$-definable finite partition of $\VF^n$. If $P_i$ is a $T_i$-approximation of $A \cap C_i$ then $P = \bigcup_i P_i$ is a $T_i$-approximation of $A$.
\end{lem}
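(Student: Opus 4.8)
The plan is to verify directly that $P = \bigcup_i P_i$ meets Definition~\ref{def:tapp}, using throughout that the partition $(C_i)_i$ is finite.

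I would first clear away the structural requirements. Since each $A \cap C_i$ is $\lan{T}{RV}{}$-definable ($A$ is, and $C_i$ is $\lan{T}{}{}$-definable), the hypothesis that $P_i$ is an approximation of $A \cap C_i$ is meaningful, and it supplies: an index $k_i$ with $P_i$ being $\lan{T_{k_i}}{}{}$-definable; the equality $\pr_{>n}(P_i) = \VF^+$; and the fact that each fiber $(P_i)_b$, $b \in \VF^+$, is $b_\zeta$-$\lan{T}{}{}$-definable. Put $j = \max_i k_i$ (a maximum over a finite set). Then $P$ is $\lan{T_j}{}{}$-definable, $\pr_{>n}(P) = \bigcup_i \pr_{>n}(P_i) = \VF^+$, and for each $b$ the fiber $P_b = \bigcup_i (P_i)_b$ is a finite union of $b_\zeta$-$\lan{T}{}{}$-definable sets and so is itself $b_\zeta$-$\lan{T}{}{}$-definable.

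The real content is the approximant condition. I would fix a single $u \ssin \UU^+$ and apply Remark~\ref{some:all} to each $P_i$; this is precisely what lets one $u$ serve all the (finitely many) $P_i$ at once, and it yields that $(P_i)_u$ is a \T-approximant of $A \cap C_i$ for every $i$. From this I would extract the two required properties of $P_u = \bigcup_i (P_i)_u$ relative to $A$. If $B$ is $\lan{T}{}{}$-definable with $B \sub A$, then for each $i$ the set $B \cap C_i$ is $\lan{T}{}{}$-definable and contained in $A \cap C_i$, hence $B \cap C_i \sub (P_i)_u$; taking the union over $i$ and using $\bigcup_i C_i = \VF^n$ gives $B = \bigcup_i (B \cap C_i) \sub \bigcup_i (P_i)_u = P_u$. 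If instead $B$ is $\lan{T}{}{}$-definable with $A \sub B$, then $A \cap C_i \sub A \sub B$ for each $i$, so $(P_i)_u \sub B$, and hence $P_u = \bigcup_i (P_i)_u \sub B$. Thus $P_u$ is a \T-approximant of $A$; since this holds for our $u$ and hence (Remark~\ref{some:all} once more) for every $u \ssin \UU^+$, $P$ is a $T_j$-approximation of $A$.

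I do not expect a genuine obstacle — the lemma is essentially bookkeeping. The two points to keep an eye on are that the partition must be finite, so that both a single $\lan{T_j}{}{}$ and a single $u \ssin \UU^+$ work uniformly across the pieces (the uniformity in $u$ being exactly Remark~\ref{some:all}), and that truncating an $\lan{T}{}{}$-definable test set $B$ to $B \cap C_i$ keeps it $\lan{T}{}{}$-definable; the latter is what makes the individual approximants $(P_i)_u$ react correctly to the test sets for $A$, and it is why the hypothesis insists the partition be $\lan{T}{}{}$-definable.
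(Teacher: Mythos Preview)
Your argument is correct and matches the paper's proof, which is a two-line version of exactly what you wrote: check that for $u \ssin \UU^+$ each $(P_i)_u$ contains $B \cap C_i$ when $B \sub A$, union over $i$, and declare the other direction similar. Your write-up is simply more explicit about the structural side conditions (a common $\lan{T_j}{}{}$, the fiberwise $b_\zeta$-$\lan{T}{}{}$-definability, and the appeal to Remark~\ref{some:all} to secure a single $u$), all of which the paper leaves tacit.
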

\begin{proof}
Let $B$ be an $\lan{T}{}{}$-definable set contained in $A$. Then, for $u \ssin \UU^+$, $P_{i}(u)$ contains $B \cap C_i$ for every $i$ and hence $P(u)$ contains $B$. The other case is similar.
\end{proof}

\begin{lem}\label{in:out:same}
Let $P$ be an approximation of  $A$. Then, for all $\usu$ and all $\lan{T}{}{}$-definable set $B$,  $B \sub A$ if and only if $B \sub P(u)$ and, dually, $A \sub B$ if and only if $P(u) \sub B$.
\end{lem}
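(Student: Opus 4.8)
The plan is to obtain the two ``only if'' directions immediately from Definition~\ref{def:tapp}, and the two ``if'' directions from a short separation argument that uses singletons over $\mdl S$ — and their complements — as test sets.

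First I would fix $\usu$ and recall that, by Definition~\ref{def:tapp}, $P(u)$ is contained in every $\lan{T}{}{}$-definable set containing $A$ and contains every $\lan{T}{}{}$-definable set contained in $A$. So if $B$ is $\lan{T}{}{}$-definable with $B \sub A$, then $B$ is among the sets $P(u)$ must contain, giving $B \sub P(u)$; and if $B$ is $\lan{T}{}{}$-definable with $A \sub B$, then $B$ is among the sets containing $A$, giving $P(u) \sub B$. That settles the forward implications.

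For the converse of the first equivalence I would argue by contradiction: assume $B$ is $\lan{T}{}{}$-definable, $B \sub P(u)$, yet $B \not\sub A$. Then $B \cap A^{\cpt}$ is a nonempty definable subset of $\VF^n$, so by the elementarity $\mdl S \esub \mdl R_{\rv}$ it contains a point $p \in \VF(\mdl S)^n$. Now $C \coloneqq \VF^n \mi \{p\}$ is an $\lan{T}{}{}$-definable (indeed $\mdl S$-definable) set, and since $p \notin A$ we have $A \sub C$; hence $P(u) \sub C$ by Definition~\ref{def:tapp}, and therefore $B \sub P(u) \sub C$, contradicting $p \in B$. The converse of the second equivalence is the dual: assuming $B$ is $\lan{T}{}{}$-definable, $P(u) \sub B$, yet $A \not\sub B$, I would pick $p \in \VF(\mdl S)^n$ inside the nonempty definable set $A \cap B^{\cpt}$; then $\{p\}$ is an $\lan{T}{}{}$-definable set contained in $A$, so $\{p\} \sub P(u) \sub B$, contradicting $p \notin B$.

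I do not anticipate a genuine obstacle here; the one thing to keep straight is the parameter convention — ``$\lan{T}{}{}$-definable'' meaning $\mdl S$-definable in the $\lan{T}{}{}$-reduct, so that $\{p\}$ and $\VF^n \mi \{p\}$ with $p \in \VF(\mdl S)^n$ genuinely qualify as $\lan{T}{}{}$-definable test sets — together with the (already standing) elementarity $\mdl S \esub \mdl R_{\rv}$, which is precisely what furnishes an $\mdl S$-point inside each nonempty definable set.
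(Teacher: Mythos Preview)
Your proof is correct and follows essentially the same route as the paper's own argument: the forward directions are immediate from the definition, and for the converse of the first equivalence the paper also picks a definable point $b \in B \mi A$ and uses $\VF^n \mi b$ as the $\lan{T}{}{}$-definable test set containing $A$, deriving the same contradiction; the paper then dismisses the second equivalence as ``similar,'' which is exactly your dual argument with the singleton $\{p\}$.
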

\begin{proof}
One direction of the first claim is by definition. For the other direction, suppose for contradiction that $B \sub P(u)$  but $B \nsubseteq A$. Choose a definable point $b \in B \mi A$. Then $\VF^n \mi b$ is an $\lan{T}{}{}$-definable set containing $A$ and hence it contains $P(u)$, which is impossible.  The second claim is similar.
\end{proof}

\begin{cor}
Let $P(u)$ be a \T-approximant of  $A$. Then $A(\mdl S) = P(u)(\mdl S)$.
\end{cor}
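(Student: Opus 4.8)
The plan is to decide membership in both $A$ and $P(u)$ by probing with $\mdl S$-definable points. Recall from the discussion above that a point of $\VF$ is definable precisely when it lies in $\VF(\mdl S)$, and that such a point is $\lan{T_i}{}{}$-definable for every $i$ (parameters from $\mdl S$ being allowed); hence, for every $b \in \VF(\mdl S)^n$, both the singleton $b$ and its complement $\VF^n \mi b$ are $\lan{T}{}{}$-definable subsets of $\VF^n$.

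First I would fix such a point $b$ and show that $b \in A$ if and only if $b \in P(u)$. If $b \in A$, then $b$ is an $\lan{T}{}{}$-definable set contained in $A$, so the defining property of a \T-approximant gives $b \sub P(u)$, that is, $b \in P(u)$. If instead $b \notin A$, then $\VF^n \mi b$ is an $\lan{T}{}{}$-definable set containing $A$, so $P(u) \sub \VF^n \mi b$, that is, $b \notin P(u)$. Letting $b$ range over $\VF(\mdl S)^n$ then yields precisely $A(\mdl S) = P(u)(\mdl S)$. Alternatively, by Remark~\ref{some:all} the set $P(u)$ is a fiber of a $T_i$-approximation of $A$, so one may instead invoke Lemma~\ref{in:out:same} with $B$ taken to be the singleton $b$.

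I do not anticipate any real obstacle here: the statement is an immediate consequence of the definition of a \T-approximant, the only thing worth checking being that $\mdl S$-definable points yield $\lan{T}{}{}$-definable singletons, a fact already recorded in the text above.
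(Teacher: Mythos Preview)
Your argument is correct and is essentially the paper's own proof: the paper simply cites Lemma~\ref{in:out:same} with $B$ a definable singleton, and your direct argument is precisely the specialization of that lemma's proof to this case. Your remark that Remark~\ref{some:all} upgrades an approximant to a fiber of an approximation before invoking Lemma~\ref{in:out:same} is a valid bridging step, though strictly speaking the proof of that lemma already only uses the approximant property.
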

\begin{proof}
By Lemma~\ref{in:out:same}, a definable point belongs to $A(\mdl S)$ if and only if it belongs to $P(u)(\mdl S)$.
\end{proof}

So the ``duality'' requirement in Definition~\ref{def:tapp} gives rise to an alternative definition of an approximation as follows.

\begin{defn}
Let $P \sub \VF^{n} \times \VF^+$ be an $\lan{T_i}{}{}$-definable set with $\pr_{>n}(P) = \VF^+$  such that each fiber $P(b)$, $b\in \VF^+$, is $b_\zeta$-$\lan{T}{}{}$-definable. We say that $P$ is a \emph{$T_i$-approximation} of $A$ if, for all $u \ssin \UU^+$,  $P(u)$ contains all $\lan{T}{}{}$-definable sets contained in $A$ and $P(u)^\cpt$ contains all $\lan{T}{}{}$-definable sets contained in $A^\cpt $.
\end{defn}

Our goal is to show that $A$ admits a $T_{n-1}$-approximation. To illustrate the key idea underlying the construction, we shall first consider a special case, namely quasi-cells, even though only a subcase of it, namely $\vv$-boxes, is needed for the general case.


\begin{lem}\label{LT:lim}
Suppose that $A$ is a quasi-cell. Then it admits a  $T_{n-1}$-approximation.
\end{lem}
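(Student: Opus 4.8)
The strategy is induction on $n$, following the structure of the quasi-cell construction. The base case $n = 1$ is essentially Example~\ref{exam:approx} combined with HNF: a quasi-cell in $\VF^1$ is a single $\vv$-interval, and its end-discs are definable hence $\lan{T_i}{RV}{}$-definable for all $i$; writing each end-disc as an intersection (from inside) or union (from outside) of $\UU$-definable intervals indexed by $\zeta_0(u)$ as in Example~\ref{exam:approx} produces the desired $T_0$-approximation. For the inductive step, write $A' = \pr_{<n}(A)$, which is a quasi-cell in $\VF^{n-1}$, and by induction fix a $T_{n-2}$-approximation $P'$ of $A'$. The fiber $A_a$ over $a \in A'$ is a $\vv$-interval of a fixed type with characteristic functions $f', f'' : \VF^{n-1} \fun \VF$ that are $\lan{T}{}{}$-definable. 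The naive guess is to set $P(u)$ to be the set of $(a, x)$ with $a \in P'(u)$ and $x$ in the $\vv$-interval between $f'(a)$ and $f''(a)$ ``fattened by $u$'' in the last coordinate --- that is, replace the end-disc around $f'(a)$ of radius $\abv(f''(a) - f'(a))$ by the interval $(f'(a) - |f''(a) - f'(a)|\,\zeta_{n-1}(u), \ldots)$ or its dual, exactly as in Example~\ref{exam:approx} but with $\zeta_{n-1}$ in place of $\zeta_0$, and similarly at the other end.

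The point of using $\zeta_{n-1}$ rather than $\zeta_0$ is the crux, and it is exactly the phenomenon flagged in the introduction: an $\lan{T}{}{}$-definable set $B$ containing $A$ has fibers $B_a$ that are finite unions of $\vv$-intervals (HNF), and near the end-disc of $A_a$ the bounding function of $B_a$ can decay or grow at any $\lan{T}{}{}$-definable rate as $a$ varies; to be swallowed by $B$, the fattening of $A_a$'s end-disc must shrink faster than every such rate, which is precisely what $\zeta_{n-1}$-infinitesimality at $+\infty$ (Definition~\ref{alcon}) buys us. Concretely, I would argue: given $\lan{T}{}{}$-definable $B \supseteq A$, by quasi-cell decomposition refine so that on $A'$ the set $B$ has a fiberwise description by $\lan{T}{}{}$-definable characteristic functions; since $B_a \supseteq A_a$ for all $a \in A'$, comparing characteristic functions and applying Lemma~\ref{Ocon}-type estimates (the power-boundedness input, via Lemma~\ref{mono} for monotonicity) shows the relevant gap between $\partial A_a$ and $\partial B_a$ is bounded below by $\abv(f''(a) - f'(a))$ times an $\lan{T}{}{}$-definable factor. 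Then for $u \ssin \UU^+$ the fattening by $\zeta_{n-1}(u)$ stays inside this gap because $1/\zeta_{n-1}$ is $T_{n-1}$-infinitesimal, hence dominated by the $\lan{T}{}{}$-definable factor evaluated along the relevant directions. Combined with $P'(u) \subseteq B'$ (by the inductive hypothesis applied to $B' = \pr_{<n}(B) \supseteq A'$, using Lemma~\ref{in:out:same}), this gives $P(u) \subseteq B$. The dual inclusion --- $P(u)$ contains every $\lan{T}{}{}$-definable $C \subseteq A$ --- is handled symmetrically, noting that the fattening is an \emph{outer} approximation of $A_a$'s end-disc exactly when the orientation/type of $A_a$ demands it, which is why both the ``intersection'' and ``union'' forms of Example~\ref{exam:approx} are needed; the bookkeeping of which of $\iota', \iota''$ is $+1, 0, -1$ determines which side gets fattened inward versus outward.

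The approximation $P$ so constructed is $\lan{T_{n-1}}{}{}$-definable: $P'$ is $\lan{T_{n-2}}{}{}$-definable, the characteristic functions $f', f''$ are $\lan{T}{}{}$-definable, and the only genuinely new ingredient is $\zeta_{n-1}$, a function symbol of $\lan{T_{n-1}}{}{}$; moreover $P_b$ is $b_\zeta$-$\lan{T}{}{}$-definable since it is built from $\lan{T}{}{}$-definable data together with the parameter $\zeta_{n-1}(b)$ (and the $\lan{T}{}{}$-definability of $P'_b$'s dependence on $b_\zeta$ by induction). Finally, invoke Lemma~\ref{omin:par}: the quasi-cell $A$ is a single cell of some $\lan{T}{}{}$-definable decomposition, but if one reduces to the case at hand by a preliminary decomposition of $A'$ into pieces over which $B$ and $C$ are uniformly described, Lemma~\ref{omin:par} reassembles the pieces into a single $T_{n-1}$-approximation.

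\textbf{Main obstacle.} The delicate point is the quantitative comparison showing that the $\zeta_{n-1}$-fattening of $\partial A_a$ is simultaneously swallowed by \emph{every} $\lan{T}{}{}$-definable $B \supseteq A$ and swallows \emph{every} $\lan{T}{}{}$-definable $C \subseteq A$, uniformly in $a \in A'$. This requires a uniform-in-parameter version of Lemma~\ref{Ocon}/power-boundedness: over the quasi-cell $A'$, the rate at which the boundary of an arbitrary $\lan{T}{}{}$-definable superset separates from $\partial A_a$ must be controlled by a single $\lan{T}{}{}$-definable function of $a$, and then $T_{n-1}$-infinitesimality of $1/\zeta_{n-1}$ at $+\infty$ (in the Hardy field of the $\lan{T}{}{}$-reduct, as in Definition~\ref{alcon}) must be transferred to a statement about the element $\zeta_{n-1}(u)$ for $u \ssin \UU^+$. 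The transfer uses that $\zeta_{n-1}$ dominates all $\lan{T_{n-2}}{}{}$-definable functions at $+\infty$ together with saturation placing $u$ beyond $\mdl S$ in $\UU^+$; making this rigorous, and correctly matching it against the orientation data of $A_a$ so that the inner/outer fattenings land on the correct ends, is where the real work lies.
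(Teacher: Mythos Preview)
Your overall plan matches the paper's: induct on $n$, take a $T_{n-2}$-approximation $P'$ of $A' = \pr_{<n}(A)$, and over each $a \in P'(u)$ fatten the fiber $A_a$ using $\zeta_{n-1}$. You also correctly locate the main difficulty. However, your sketch of how to resolve it misidentifies the mechanism.

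You say the gap between $\partial A_a$ and the boundary of a given $\lan{T}{}{}$-definable $C \supseteq A$ is bounded below by $\delta(a)$ times ``an $\lan{T}{}{}$-definable factor,'' and that $T_{n-1}$-infinitesimality of $1/\zeta_{n-1}$ then beats this factor. But the normalized gap $g(a) = (f_2(a)-\theta(a))/\delta(a)$ \emph{is} that $\lan{T}{}{}$-definable factor, and there is no useful uniform lower bound for it over $P'(u)$: the set $P'(u)$ is not contained in $A'$, and on $P'(u) \smallsetminus A'$ nothing is known about $g$ a priori. The paper's device is to form $h(b) = \inf\{g(a) : a \in P'(b)\}$, an $\lan{T_{n-2}}{}{}$-definable function of the \emph{single} variable $b$, and argue in two steps. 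First, $h(u) \in \UU^+$: if $h$ fell into $\MM^+$ on a $\vv$-interval $(\MM,d)$ then by Lemma~\ref{Ocon} and monotonicity there would be a definable $c \in \MM^+(\mdl S)$ above $h$ there, whence the $\lan{T}{}{}$-definable set $g^{-1}(\{x>c\})$ contains $A'$ (since $g(A') \sub \UU^+$) and hence must contain $P'(u)$, contradicting $h(u) < c$. Second, $h \rest \UU^+$ $\res$-contracts to an $\lan{T_{n-2}}{}{}$-definable $h_\downarrow$ on $\K^+$, and it is $h_\downarrow \circ (1/x)$ that $\zeta_{n-1}$-domination beats. So the approximation property of $P'$ enters in an essential way \emph{inside} the comparison, not only to obtain $P'(u) \sub \pr_{<n}(C)$.

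Two smaller points. The open-disc and closed-disc fiber types require different fattening scales, $1/\zeta_{n-1}(1/b)$ versus $\zeta_{n-1}^{-1}(1/b)$ (the latter is infinite-$T_{n-1}$-constant, not $T_{n-1}$-infinitesimal); your orientation bookkeeping should track which of these is in play at each end. And your closing appeal to Lemma~\ref{omin:par} is out of place: the partition there must be fixed for $A$ independently of the test sets $B$, $C$, so it cannot be chosen ``so that $B$ and $C$ are uniformly described''; the paper's proof of this lemma does not invoke Lemma~\ref{omin:par} at all.
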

\begin{proof}
Fix any $\usu$. We proceed by induction on $n$. For the base case $n=1$, that is, $A$ is just a $\vv$-interval, the essential cases have already been checked in Example~\ref{exam:approx} and the other cases are similar. Recall Definition~\ref{fun:iota} and note that if $A$ is not oriented and $P$ is a \T-approximation of $A$ as constructed in Example~\ref{exam:approx} then $P(u)$ neither contains nor is contained in $A$.

For the inductive step, we will first deal with three distinguished cases and then explain  how to reduce all other cases to these three cases. That $P(u)$ is $u_\zeta$-$\lan{T}{}{}$-definable will be quite clear from the construction and the inductive hypothesis, so we will not bring this issue up anymore.

Let  $A' = \pr_{<n}(A)$, which is a quasi-cell. Let $P'$ be a $T_{n-2}$-approximation of $A'$.

\textbf{Case (1)}: The sets $A_a$, $a \in A'$, are all open intervals.

Recall Terminology~\ref{cell:stip}. Let $\theta_1 < \theta_2 : \VF^{n-1} \fun \VF$ be \LT-definable characteristic functions of $A$ such that, for all $a \in A'$, $A_a = (\theta_1(a), \theta_2(a))$. For each $b \in \VF^+$, let $P(b) = \bigcup_{a \in P'(b)} a \times I_{a}$, where $I_{a}$ is the open interval $(\theta_1(a), \theta_2(a))$. By Remark~\ref{pieapprox},  $P(u)$ does not depend on the choice of $\theta_1$, $\theta_2$.

We claim that $P(u)$ is contained in  every \LT-definable set $C$ containing $A$. To see this, let $C' = \pr_{<n}(C)$. Then $P'(u) \sub C'$. Since shrinking $C$ is conducive to our purpose, we may assume that there are $\lan{T}{}{}$-definable functions $g_1, g_2 : \VF^{n-1} \fun \VF$ such that $g_1 \leq \theta_1 < \theta_2 \leq g_2$ and  $C$ is of the form $(g_1, g_2)_{C'}$. This readily implies $P(u) \sub C$. Dually, if $C \sub A$ then $C' \sub P'(u)$,  and since $C \sub (\theta_1, \theta_2)_{C'}$, it is rather clear that  $C \sub P(u)$. So the function $P$ on $\VF^+$ is  a $T_{n-1}$-, in fact,  $T_{n-2}$-approximation of $A$.

\textbf{Case (2)}: The sets $A_a$, $a \in A'$, are all open discs. This and the next cases are more illuminating.

Let $\theta < \rho : \VF^{n-1} \fun \VF$ be \LT-definable characteristic functions such that, for all $a \in A'$,
\[
\theta(a) \in A_a \dand \abv(\rho(a) - \theta(a)) = \rad(A_a).
\]
Set $\delta = \rho - \theta$.    For  $b \in \VF^+$, denote the open interval
\begin{equation}\label{infinter}
 (\theta(a) - \delta(a) / \zeta_{n-1}(1/b), \theta(a) + \delta(a) / \zeta_{n-1}(1/b) )
\end{equation}
by $I_{ba} $ and set $P(b) = \bigcup_{a \in P'(b)} a \times I_{ba}$.

Again, we show that $P(u)$ is contained in  every \LT-definable set $C$ containing $A$. We may assume that $C$ is of the form $(f_1, f_2)_{C'}$, where $C'$ is as before and $f_1, f_2 : \VF^{n-1} \fun \VF$ are $\lan{T}{}{}$-definable functions with
\[
f_1 - \theta = \theta - f_2 < 0 \dand f_2 \leq \rho.
\]
Let $g :  \VF^{n-1} \fun \VF^+$ be the $\lan{T}{}{}$-definable function given by
\[
a \efun (f_2(a) - \theta(a)) / \delta(a).
\]

For each $b \in \VF^+$, let  $D_{b} = \set{g(a) \given a \in P'(b)}$ and $h : \VF^+ \fun \VF^+$ be the $\lan{T_{n-2}}{}{}$-definable function given by $b \efun \inf D_b$. Since $g(a) \in \UU^+$ for all $a \in A'$, we must have $h(u) \in \OO^+$. By monotonicity and HNF, there exists a $d \in \UU^+(\mdl S)$ such that if we write $I_d$ for the $\vv$-interval $(\MM,d)$ then $h \rest I_d$ is monotone and either $h(I_d) \sub \MM^+$ or  $h(I_d) \sub \UU^+$. Suppose for contradiction $h(I_d) \sub \MM^+$.   If $h \rest I_d$ is not strictly decreasing then clearly there is a $c \in \MM^+(\mdl S)$ with $c > h(I_d)$. If  it is strictly decreasing then, by Lemma~\ref{Ocon}, $\vv(h(I_d))$ has a maximal element and hence the same conclusion holds. On the other hand, observe that if $a \in A'$ then $g(a) > \MM$. It follows that, for any such $c \in \MM^+(\mdl S)$, the $\lan{T}{}{}$-definable set $g^{-1}(\{x > c\})$ contains $A'$ and hence it must contain $P'(u)$, which is impossible.

Now, $D_u \geq h(u) > \MM$ and hence
\[
P_{h}(u) \coloneqq \bigcup_{a \in P'(u)} a \times I_{hua} \sub C,
\]
where $I_{hua}$ is the interval defined as in (\ref{infinter}) but with $1/\zeta_{n-1}(1/b)$ replaced by $h(u)$. If  $\gu$ is an $\RV$-disc and $\MM < \gu < \VF^+(\mdl S) \mi \MM$ then $\gu$ is indeed $\code \gu$-atomic (see \cite[Notation~2.34, Definition~3.10]{Yin:tcon:I}) and hence, by \cite[Lemma~3.13]{Yin:tcon:I}, $h(\gu)$ is either a point or an open disc. Therefore, by \cite[Theorem~A]{Dries:tcon:97}, the restriction $h \rest \UU^+$  $\res$-contracts to an $\lan{T_{n-2}}{}{}$-definable function $h_{\downarrow} : \K^+ \fun \K^+$ near $0 \in \K$. Alternatively, this may be deduced from \cite[Proposition~2.20]{DriesLew95} since $h \rest I_d$ is the restriction of  a continuous $\lan{T_{n-2}}{}{}$-definable function $\VF \fun \VF$. Anyway,   by the choice of $\zeta_{n-1}$, the function $h_{\downarrow} \circ 1/x$ dominates $1 / \zeta_{(n-1)}$ at $+\infty$ in $\K$ and hence $P(u) \sub P_{h}(u) \sub C$.

We also need to show that if $C \sub A$ then it is contained in $P(u)$.  As in Case (1), this is clear since if $a \in A' \cap P'(u)$ then $A_a \sub I_{ua}$.

\textbf{Case (3)}: The sets $A_a$, $a \in A'$, are all closed discs.

This case is quite similar to Case (2), so we will be brief where details may be readily  inferred from the discussion above. We still have the two functions $\theta$, $\rho$ and the construction of the function $P$ is the same except that, in (\ref{infinter}),  $1 / \zeta_{n-1}(1/b)$ is replaced by $\zeta_{n-1}^{-1}(1/b)$.

Let $A \sub C$, where $C$ is of the form $(f_1, f_2)_{C'}$, but of course without the condition $f_2 \leq \rho$. Let $g$, $h$, etc., be as  above. So $g(a) > \OO$ for all $a \in A'$. If $h(u) > \OO$ then clearly $P(u) \sub C$. So assume $h(u) \in \OO^+$.  The function $h_{\downarrow}$ is unbounded near $0 \in \K$, for otherwise there would be a $c \in \UU^+(\mdl S)$ such that $g^{-1}(\{x > c\})$ contains $A'$ but not $P'(u)$. By the choice of $\zeta_{n-1}$, the function $h_{\downarrow} \circ 1/x$ dominates $\zeta^{-1}_{n-1}$ at $+\infty$ in $\K$ and hence $P(u) \sub P_{h}(u) \sub C$.

On the other hand, if $C \sub A$ then  $g(C') \sub \OO^+$. Since $g(C')$ is an \LT-definable set, there is a $c \in \UU^+(\mdl S)$ such that $g(C') \sub (-c, c)$. By the choice of $\zeta_{n-1}$ again, we have $\zeta_{n-1}^{-1}(1/u) > c$ and hence $C \sub P(u)$.

\textbf{The general case}: The three cases above, together with certain obvious variations, are the building blocks for the general case, that is, $A_a$, $a \in A'$, are $\vv$-intervals,  all of the same type. The key to the construction is the choice between the two functions $1/x \circ \zeta_{n-1} \circ 1/x$ and $\zeta^{-1}_{n-1} \circ 1/x$ in (\ref{infinter}).  This surely depends on the type of the $\vv$-interval $A_a$, but there are just three possibilities, for either ends of the $\vv$-intervals $A_a$, corresponding to the three cases above. Let us just  consider the lower ends.
\begin{itemize}[leftmargin=*]
  \item If $\iota'(A_a) = 1$ then $\zeta^{-1}_{n-1} \circ 1/x$ is used, as in Case(3).
  \item If $\iota'(A_a) = 0$ then  the constant function $0$ is actually used, since $\theta$ will just be the bounding function (possibly taking the values $\pm \infty$ if the end is unbounded), as in Case (1).
  \item If $\iota'(A_a) = -1$ then $1/x \circ \zeta_{n-1} \circ 1/x$ is used, as in Case (2).
\end{itemize}
Note that, though, for $\iota'(A_a) = \pm 1$,  $\zeta^{-1}_{n-1} \circ 1/x$ and $1/x \circ \zeta_{n-1} \circ 1/x$ may need to be replaced by $1/x \circ \zeta^{-1}_{n-1} \circ 1/x$ and $\zeta_{n-1} \circ 1/x$, respectively, depending on whether the end is open or closed.
\end{proof}

\section{Proof of the main theorem}

To generalize  Lemma~\ref{LT:lim} for all $\lan{T}{RV}{}$-definable sets $A \sub \VF^n$, we need to construct an $\lan{T}{RV}{}$-definable cell decomposition of $A$ of a special form. It is much easier to grasp how such a cell decomposition looks like in lower dimensions, so we first describe  a prototype with $A = \VF$.

Let $(a, b)$ be a definable open interval and $\ga$ a definable disc with $\rad(\ga) > 0$ such that either $a \in \ga$ and $b \notin \ga$ or the other way around. Thus $\ga$ induces a so-called \emph{bipartite} cell decomposition of $(a, b)$. Such cell decompositions can be classified into four types, according to whether $\ga$ is open or closed and which endpoint it contains. Let $(A_i)_i$ be a cell decomposition of $\VF$. We claim that there is an $\lan{T}{}{}$-definable cell decomposition $(I_j)_j$ of $\VF$ such that  the cell decomposition of each $I_j$ induced by $(A_i)_i$ is either trivial  or bipartite; we call such an $\lan{T}{}{}$-definable cell decomposition $(I_j)_j$ an \emph{\omin-frame} of $(A_i)_i$. To see this, let $(\ga_k)_k$ enumerate the distinct end-discs of the cells $A_i$ (so if $A_i < A_{i+1}$ are not properly disconnected then the end-discs of their opposing ends are not listed twice). Let $(I_j)_j$ be any $\lan{T}{}{}$-definable cell decomposition of $\VF$ such that each $\ga_k$ contains an endpoint of some cell $I_j$. This condition ensures that the set of the endpoints of the cells $I_j$ include all the points among the discs $\ga_k$. Let $n_j$ be the number of the cells $A_i$ such that $A_i \cap I_j \neq \0$. Clearly we are done if $\max_j n_j = 2$ (if $n_j = 2$ then the end-disc in question cannot be a point due to the extra condition on the cells $I_j$). Otherwise, say, $n_1 > 2$. Then there is an $A_i$ such that $A_i \sub I_1$. Pick any definable point in $A_i$, which splits $I_1$ into two open intervals and a point. Iterate this operation for all such cells $I_j$, and so on. Then $\max_j n_j$ decreases and eventually we reach an \LT-definable cell decomposition that is as desired. Note that if $(I_j)_j$ is an \omin-frame of $(A_i)_i$ then any \LT-definable refinement of the former is also an \omin-frame of the latter.


\begin{rem}
Continuing the discussion above, we proceed to discuss the general case. Let $(I^n_k)_k$ be an \LT-definable cell decomposition of $\VF^n$. Let $(A^n_i)_i$ be an  $\lan{T}{RV}{}$-definable cell decomposition of $\VF^n$ that refines $(I^n_k)_k$ and is compatible with $A$ (that is, $A$ is the union of some of the cells $A^n_i$). Then there are finitely many $\lan{T}{}{}$-definable functions $f_j^{n-1} : \VF^{n-1} \fun \VF$, including
\begin{itemize}[leftmargin=*]
  \item characteristic functions of the cells $A^n_i$ in the last coordinate,
  \item (any) extensions of the last boundary functions of the cells $I^n_k$
\end{itemize}
such that, for each $a \in \VF^{n-1}$, the partition of $\VF$ induced by the points $f_j^{n-1}(a)$ is an \omin-frame of the cell decomposition $(A^n_{i,a})_i$ of $\VF$. This simply follows from the construction just described above and compactness.

Let $(I^{n-1}_k)_k$  be an \LT-definable cell decomposition  of $\VF^{n-1}$, compatible with the sets $\pr_{< n}(I^n_k)$, such that
\begin{itemize}[leftmargin=*]
  \item for all $j$, $f_j^{n-1} \rest I^{n-1}_k$ is continuous,
  \item for all $j$, $j'$ and all $a, a' \in I^{n-1}_k$, $f_j^{n-1}(a) < f_{j'}^{n-1}(a)$ if and only if $f_j^{n-1}(a') < f_{j'}^{n-1}(a')$.
\end{itemize}
Then the \LT-definable cell decomposition of $\VF^n$ induced by $(I^{n-1}_k)_k$ and the functions $f_j^{n-1}$ is a refinement of $(I^n_k)_k$. Let $(A^{n-1}_i)_i$ be a cell decomposition  of $\VF^{n-1}$, compatible with both $(I^{n-1}_k)_k$  and $\pr_{< n}(A^n_i)$. So $(A^{n-1}_i)_i$ induces, via $(A^n_i)_i$, a cell decomposition  of $\VF^{n}$. Repeating the procedure above with respect to $(A_i^{n-1})_i$ and $(I^{n-1}_k)_k$, we obtain  $\lan{T}{}{}$-definable functions $f_j^{n-2} : \VF^{n-2} \fun \VF$, and so on; in particular, each $f_j^{0}$ is actually just a point in $\VF$.
\end{rem}

The functions $f_j^i$, as boundary functions, determine  an \LT-definable cell decomposition of $\VF^n$. We concentrate on one of these cells, say $C$, and write, after renaming the functions if necessary, $(f_1^{i}, f_2^{i})_{C^i}$ for the intermediate cells, where $C^i = \pr_{\leq i}(C)$ and possibly $f_1^{i} \rest C^i = f_2^{i} \rest C^i$. The construction above also yields an $\lan{T}{RV}{}$-definable cell decomposition $(B_j)_j$ of $C$ such that
\begin{itemize}[leftmargin=*]
 \item the sets $B^{i}_j = \pr_{\leq i}(B_j)$ form a cell decomposition of $C^i$,
 \item for each $b \in B^{i}_j$, the induced cell decomposition of the interval $(f_1^{j}(b), f_2^{j}(b))$ is either trivial or bipartite.
\end{itemize}
Thus there are at most $2^n$ cells in this cell decomposition $(B_j)_j$ of $C$. Observe that, for each cell $B_j$, if  the type of its last coordinate involves open or closed end-discs then these end-discs must ne among the end-discs in the last coordinate of a single cell $A^n_i$, similarly for the other coordinates. This confirms that the sets $B_j$ are indeed cells, that is, they are equipped with characteristic functions. In more detail, starting with the last coordinate, if the end-discs in question are closed then  an   $\lan{T}{}{}$-definable function $g_j^{n-1} : B^{n-1}_j \fun \VF$ with $f_1^{n-1} < g_j^{n-1} < f_2^{n-1}$ can be constructed such that either $(f_1^{n-1}, g_j^{n-1})$ or $(f_2^{n-1}, g_j^{n-1})$ form a  characteristic function for both of the cells whose images under $\pr_{< n}$ are  $B^{n-1}_j$, and if the end-discs in question are open then the construction already guarantees that $f_1^{n-1}$, $f_2^{n-1}$  form such a characteristic function, similarly for the second last coordinate, and so on.

\begin{rem}
Using the same construction as in the proof of Lemma~\ref{vbox}, we see that  there are  $\lan{T}{}{}$-definable  bijections $\sigma_j : \VF^n \fun \VF^n$ and $\sigma_j^i : \VF^{i} \fun \VF^i$, $1 \leq i \leq n$,  such that
\begin{itemize}[leftmargin=*]
  \item $\sigma_j(C)$ is a box each one of whose sides is $0$ or $\VF^+$,
  \item $\sigma_j(B_j)$ is a $\vv$-box each one of whose sides is $0$ or $\VF^+$ or $\MM^+$ or $(\MM, +\infty)$ (the last two possibilities may be replaced by  $\OO^+$ and  $(\OO, +\infty)$, after applying the function $x \efun 1 /x$),
  \item $\sigma_j^i \circ \pr_{\leq i} = \pr_{\leq i} \circ \sigma_j$ and, for all $j$, $j'$, $i$, if $B_j^i = B_{j'}^i$ then $\sigma_j^{i+1} = \sigma_{j'}^{i+1}$.
\end{itemize}
However, we cannot find such a bijection that simultaneously turns every $B_j$ into a $\vv$-box, otherwise the theorem below would be much easier to prove.
\end{rem}

\begin{nota}\label{tree}
Let  $\bar B_j = \sigma_j(B_j)$ and $\bar B^i_j = \pr_{\leq i}(\bar B_j) = \sigma_j^i (\pr_{\leq i}(B_j))$. The set of these $\vv$-boxes gives rise to an obvious binary tree whose branches are of the form $(\bar B^1_j, \bar B^2_j, \ldots, \bar B^n_j)$. This tree is rooted if $B^1_j = C^1$ for some (hence all) $j$. By slight abuse of notation, we shall denote the path $(\bar B^1_j, \bar B^2_j, \ldots, \bar B^i_j)$ simply by its last vertex $\bar B^i_j$ when there is no danger of confusion. Given two paths $\bar B^i_{j}$ and $\bar B^{i'}_{j'}$, let $\bar B^i_{j} \sqcap \bar B^{i'}_{j'}$ denote the longest path they share. For each $j > 1$, let $l(j) < j$ be the least index such that the path $\bar B_{l(j)} \sqcap \bar B_{j}$ is the longest among all the paths $\bar B_{j'} \sqcap \bar B_{j}$, $j' < j$.

For each $i < n$, there is a similar function $l^i$ with respect to the paths $\bar B^i_j$; here we need to delete all the later repetitions, since possibly $\bar B^i_{j} = \bar B^{i}_{j'}$ for $j < j'$, but we will not reindex them for the following reason: if $\bar B^i_{j}$ is not a repetition then we have $l(j) = l^i(j)$.
\end{nota}


We are now ready to prove the main theorem of this paper.

\begin{thm}\label{approx:ex}
Every $\lan{T}{RV}{}$-definable set $A \sub \VF^n$ admits a $T_{n-1}$-approximation.
\end{thm}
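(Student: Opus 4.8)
The plan is to reduce the general case to the quasi-cell (indeed $\vv$-box) case already handled in Lemma~\ref{LT:lim}, using the combinatorial apparatus set up in Notation~\ref{tree}. First I would fix $\usu$ and invoke the preceding discussion: apply the special cell decomposition construction to obtain an $\lan{T}{}{}$-definable cell $C$ with sub-cells $(B_j)_j$ of $C$ (at most $2^n$ of them, arranged in the binary tree of Notation~\ref{tree}), together with the $\lan{T}{}{}$-definable bijections $\sigma_j$, $\sigma_j^i$ turning each $B_j$ into the $\vv$-box $\bar B_j = \sigma_j(B_j)$ whose sides are among $0$, $\VF^+$, $\MM^+$, $(\MM,+\infty)$. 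By Lemma~\ref{omin:par} it suffices to approximate $A\cap C$ for a single $C$, and since $A\cap C$ is a union of some of the $B_j$, a further application of Lemma~\ref{omin:par} reduces us to approximating each individual $B_j$ --- except that this last reduction is exactly what is \emph{not} innocent, because the $B_j$ are not simultaneously $\vv$-boxes under one bijection, so a $\T$-approximant of $B_j$ pulled back through $\sigma_j^{-1}$ must be shown compatible with approximants of the sibling cells $B_{j'}$.

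The heart of the argument is therefore to build the approximation of $A\cap C$ coherently along the tree. For each $B_j$, Lemma~\ref{vbox}/Lemma~\ref{LT:lim} gives an explicit $T_{n-1}$-approximation $Q_j$ of the $\vv$-box $\bar B_j$, built coordinate by coordinate using the three model functions $1/x\circ\zeta_k\circ 1/x$, $\zeta_k^{-1}\circ 1/x$, and the constant $0$ at level $k$, dictated by the orientation data $\iota'$, $\iota''$ of the $k$-th side. Pulling back, $P_j(b) = \sigma_j^{-1}(Q_j(b))$ is a candidate approximant of $B_j$. The key point is that the cell-decomposition construction was arranged precisely so that the end-discs appearing in the last coordinate of any $B_j$ are among the end-discs of the last coordinate of a single $A^n_i$, and the bounding functions $f^i_j$ were chosen $\lan{T}{}{}$-definably and ordered consistently on each piece; combined with the third bullet of Notation~\ref{tree} ($\sigma_j^{i+1}=\sigma_{j'}^{i+1}$ whenever $B^i_j = B^i_{j'}$), this lets one check that the $P_j$ agree on overlaps of their ``shared paths'' and that $P = \bigcup_j P_j$ is well-defined and $u_\zeta$-$\lan{T}{}{}$-definable. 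One then verifies the two defining inclusions of Definition~\ref{def:tapp}: given an $\lan{T}{}{}$-definable $D \subseteq A\cap C$, shrink $D$ to sit inside a single cell-shaped region, transport through $\sigma_j$, and apply the $\vv$-box case for $Q_j$; dually for $D\supseteq A\cap C$ one uses $D^\cpt$ and the complement formulation, with $\VF^n\mi\{b\}$ arguments as in Lemma~\ref{in:out:same} to rule out stray points.

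I would carry out the details by induction on $n$, peeling off the last coordinate: assume the theorem for $\VF^{n-1}$, obtain a $T_{n-2}$-approximation $P'$ of the base $\pr_{<n}(A\cap C)$ (really of each $\bar B^{n-1}_j$, glued along $l^{n-1}$), and then fiber over it exactly as in the three distinguished cases of the proof of Lemma~\ref{LT:lim}, replacing the literal intervals there by the $\vv$-intervals cut out by the characteristic functions $f_1^{n-1}, f_2^{n-1}$ (and the auxiliary $g_j^{n-1}$ when the end-discs are closed). The $\res$-contraction arguments via \cite[Theorem~A]{Dries:tcon:97} and Lemma~\ref{Ocon}, and the domination properties of $\zeta_{n-1}$ from Definition~\ref{alcon}, go through verbatim because they only concern the single last coordinate; the increase from $T_{n-2}$ to $T_{n-1}$ is incurred exactly once, at this last coordinate, as in Lemma~\ref{LT:lim}. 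The main obstacle, and the step I expect to require the most care, is precisely the coherence of the gluing across the $\le 2^n$ branches of the tree: showing that the bijections $\sigma_j$ chosen independently for each $B_j$ nonetheless produce approximants that fit together into a single definable $P$ satisfying both inclusions uniformly --- this is where the bookkeeping of Notation~\ref{tree} (the functions $l$, $l^i$, the stipulation that $\sigma$'s agree on shared initial segments) has to be used in full force, and where a naive ``approximate each $B_j$ separately'' would fail.
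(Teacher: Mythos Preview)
Your overall architecture is right, and you have correctly located the real difficulty: the $B_j$ are not simultaneously $\vv$-boxes, so one cannot just union their individual approximants. But your proposed resolution has a genuine gap.

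First, the step ``shrink $D$ to sit inside a single cell-shaped region, transport through $\sigma_j$'' does not work. The cells $B_j$ are cut by valuative discs and are not $\lan{T}{}{}$-definable, so an $\lan{T}{}{}$-definable $D\sub A\cap C$ typically meets several $B_j$ and cannot be $\lan{T}{}{}$-definably pushed into one of them. The paper does the opposite of what you suggest: the direction $A\sub D\Rightarrow P(u)\sub D$ is the easy one (each $P_j$ individually approximates $B_j$), while $D\sub A\Rightarrow D\sub P(u)$ is the hard one, and it is handled by showing that for each $a$ in the base approximant one has $D_a\sub P_1(u)_a\cup P_2(u)_a$, where $B_1,B_2$ are the (at most two) cells over the relevant base cell. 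For this one must know that $P_1(u)_a\cup P_2(u)_a$ covers the whole fiber $\VF^+$, i.e.\ that there is \emph{no gap} between the sibling approximants in the last coordinate.

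Second, this no-gap property is exactly what your forward construction (base first, then last coordinate via $\zeta_{n-1}$ as in Lemma~\ref{LT:lim}) fails to deliver. If $\pr_n(B_1)=\MM^+$ and $\pr_n(B_2)=(\MM,+\infty)$, the forward recipe approximates one side with a $T_{n-1}$-infinitesimal function and the other with an infinitesimal-$T_{n-1}$-constant function; these are of different growth type and leave a gap. The paper's fix is twofold: (i) build each $\bar P_j$ \emph{backwards}, starting from the $n$th coordinate, so that the last coordinate uses $\zeta_0$ (the same growth level for both siblings); and (ii) introduce strictly increasing $\lan{T_{n-1}}{}{}$-definable reparametrizations $s_j:\VF^+\to\VF^+$, constructed inductively along the tree via $l(j)$, so that $\bar P^{i+1}_{j'}(s_{j'}(b))\cup\bar P^{i+1}_{j}(s_j(b))=\VF^+\times\bar P^i_j(s_j(b))$ whenever $\bar B^i_{j'}=\bar B^i_j$ but $\bar B^{i+1}_{j'}\neq\bar B^{i+1}_j$. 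One also needs half-closed intervals $(0,b]$ for the $\MM^+$ sides to avoid the one-point gap at the common endpoint. A consequence of the backward build is that the induced approximation of $A^{n-1}=\pr_{<n}(A)$ is a $T_{n-1}$-approximation (using the shifted growth representative $(\zeta_1,\zeta_2,\ldots)$), not a $T_{n-2}$-approximation as you wrote; this is what the inductive hypothesis is applied to. Your outline contains none of these three devices, and without them the coherence you need does not follow from the bookkeeping in Notation~\ref{tree} alone.
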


\begin{proof}
As in the proof of Lemma~\ref{LT:lim}, that every $P(b)$ is $b_\zeta$-$\lan{T}{}{}$-definable will be rather clear from the construction, so we will not be concerned with it. Also, by the discussion above and Lemma~\ref{omin:par}, without loss of generality, we may assume that   $A$ is contained in $C$. Then, by the construction of $(B_j)_j$, $A$ is a union of some of the cells $B_j \sub C$. In fact, each set $A^i = \pr_{\leq i}(A)$ is a union of some of the cells $B^i_j \sub C^i$.

The proof proceeds in four stages. In the first two stages we describe how the approximation is constructed. Then we show that the construction indeed yields an approximation.

\textbf{Stage (1)}: We begin with some preliminary preparations. Since $\bar B_j$ is a cell, we can construct a $T_{n-1}$-approximation $\bar P_j$ of it as in  the proof  of Lemma~\ref{LT:lim}. Observe that the function $P_{j}$ on $\VF^+$ given by $b \efun \sigma_j^{-1}(\bar P_{j}(b))$ is indeed a  $T_{n-1}$-approximation  of $B_j$. However, these $\bar P_j$ are not good enough for our purpose. We will carry out a more careful construction by induction on the number of cells $B_j$, in Stage (2), so to make the resulting approximations satisfy some extra properties. There are a few general modifications that are applied in each step of this inductive construction, which we list here and will not mention them again below.

\begin{enumerate}[leftmargin=*]
  \item A cell is defined with respect to a specific order of the coordinates and hence the construction in the proof  of Lemma~\ref{LT:lim} also relies on this order. But $\bar B_j$ is a very simple cell, namely a $\vv$-box, so  we can  construct a $T_{n-1}$-approximation with respect to any order of the coordinates, in particular, we can start with the $n$th coordinate and move downward, and this is how each $\bar P_j(b)$, which is indeed a box, is constructed.
  \item In each step of the construction of each $\bar P_j$, if the $\vv$-interval in question is $\MM^+$ then  half-closed intervals of the form $(0, b]$ are used in the corresponding approximation, instead of open intervals, as in (\ref{infinter}).
  \item The constructions of different approximations $\bar P_j$ may employ different growth representatives (closely related to $(\zeta_i)_i$, but $(\zeta_i)_i$ itself is quantified over, that is, it is in effect a variable).
\end{enumerate}

Eventually, when stitching these approximations $P_j$ together to form a single function $P$ on $\VF^+$, which is meant to be a $T_{n-1}$-approximation of $A$, we wish to have certain flexibility in choosing which fiber of $P_j$ the set $P(b)$ actually contains; to be more precise, instead of simply setting $P(b) = \bigcup_j P_j(b)$ for every $b \in \VF^+$, we can choose a sequence of strictly increasing $\lan{T_{n-1}}{}{}$-definable functions $s_j : \VF^+ \fun \VF^+$ and then  set $P(b) = \bigcup_j P_j (s_j(b))$. For a single $j$, the functions $P_j$ and $P_j \circ s_j$ may seem more or less  the same, in particular, both are $T_{n-1}$-approximations  of $B_j$. But for $j \neq j'$, the ``contact'' relation between $P_j(b)$ and $P_{j'}(b)$ is potentially different from that between $P_j (s_j(b))$ and $P_{j'} ( s_{j'}(b))$. This point will become clearer in the discussion below.   These functions $s_j$ will also be constructed inductively.

Note that the cells $P_j (s_j(b))$ do not form a partition of $P(b)$, since they may overlap.

\textbf{Stage (2)}: We now proceed to the inductive construction of $\bar P_j$ and $s_j$. For $0 \leq i \leq n$, let $\bar P^i_{j}$ be the function given by $b \efun \pr_{\leq i}(\bar P_{j} (b) )$. The following condition needs to be satisfied in the inductive step:
\begin{quote}
for every $j' < j$ and every $0 \leq i \leq n$, if $\bar B^i_{j'} = \bar B^i_{j}$ then $\bar P^i_{j'} \circ s_{j'} = \bar P^i_{j} \circ s_{j}$, and if, in addition, $\bar B^{i+1}_{j'} \neq \bar B^{i+1}_{j}$ then, for all $b \in \VF^+$,
\[
\bar P^{i+1}_{j'} ( s_{j'}(b))  \cup \bar P^{i+1}_{j} (s_{j}(b)) = \VF^+ \times \bar P^{i}_{j} ( s_{j}(b)).
\]
\end{quote}
Alternatively, we can simply require that the condition holds for $l(j)$ instead of every $j' < j$; it is not difficult to see that this is an equivalent formulation.

For the base case $j=1$, simply construct $\bar P_1$ as in the proof  of Lemma~\ref{LT:lim}, subject to the general modifications described above, and take $s_1 = \id$. For the inductive step, suppose that we  have constructed $\bar P_{j'}$ and $s_{j'}$ for all $j' < j$. To construct $\bar P_{j}$ and $s_{j}$, we only need to concentrate on $\bar B_{l(j)}$. For simplicity, write $l(j)$ as $l$. Denote the length of $\bar B_{l} \sqcap \bar B_{j}$ by $k$. Each $\bar P_{l} ( s_l(b))$ is a box whose sides are of the form $(0, d_i(b)]$ or $(d_i(b), +\infty)$ or $0$ or $(0, +\infty)$, corresponding to the sides $\MM^+$ or $(\MM, +\infty)$ or $0$ or $(0, +\infty)$ of the $\vv$-box $\bar B_j$; for the last two cases we set $d_i(b) = 0$. The sequence of points $d_n(b), \ldots, d_1(b)$, where the indices indicate the corresponding coordinates (remember that the construction  of  $\bar P_{l}$ starts in the $n$th coordinate), may be written as
\[
\dot \zeta_{l,0}\circ s_l(b), \ldots, \dot \zeta_{l,(n-1)} \circ s_l(b),
\]
where $\zeta_{l, i} : \VF^+ \fun \VF^+$ is a $T_{i}$-infinite function and $\dot \zeta_{l,i}$ is the function $1 / x \circ \zeta_{l,i} \circ 1/x$ or the function $1/x \circ \zeta^{-1}_{l,i} \circ 1/x$ or the constant function $0$, corresponding to the three possibilities of $d_i(b)$ as well as the three cases indicated towards the end of  the proof of Lemma~\ref{LT:lim}.
\begin{itemize}[leftmargin=*]
  \item For $0 \leq i \leq n-k-1$, let $\dot \zeta_{j,i} : \VF^+ \fun \VF^+$ be a function of one of these three forms, with $\zeta_{j,i} = \zeta_{l,i}$, such that the boxes determined by the sequence of points $\dot \zeta_{j,0}(b), \ldots, \dot \zeta_{j,(n-k-1)}(b)$, $b \in \VF^+$, form a $T_{n-k-1}$-approximation of $\pr_{> k}(\bar B_j)$. Note that, by the definition of $k$, we have $\dot \zeta_{j,(n-k-1)} = \dot \zeta_{l,(n-k-1)}^{-1}$, which exists, at least near $0$, since $\dot \zeta_{l,(n-k-1)}$ cannot be $0$.
  \item For $n-k-1 < i \leq n-1$, let
\[
\dot \zeta_{j,i} = \dot \zeta_{l,i} \circ \dot \zeta_{l,(n-k-1)}^{-1} \circ \dot \zeta_{l,(n-k-1)}^{-1}.
\]
Observe that the functions $\dot \zeta_{j,i}$, $\dot \zeta_{l,i}$ are of the same ``growth'' type near $0$, that is, both are $T_i$-infinitesimal or infinitesimal-$T_i$-constant or $0$.
\end{itemize}
Therefore, the boxes $\bar P_j(b)$, $b \in \VF^+$, determined by the sequences of points $\dot \zeta_{j,0}(b), \ldots, \dot \zeta_{j,(n-1)}(b)$ form a $T_{n-1}$-approximation $\bar P_j$ of $\bar B_j$. Set
\[
s_j = \dot \zeta_{l,(n-k-1)} \circ \dot \zeta_{l,(n-k-1)} \circ s_l.
\]
Then $\bar P_j$, $s_j$ are as required. Note that, since $s_j : \VF^+ \fun \VF^+$ is a strictly increasing $\lan{T_{n-1}}{}{}$-definable function, it follows that $s_j(u) \ssin \UU^+$ for all $u \ssin \UU^+$ and hence $\bar P_j \circ s_j$ is  also a $T_{n-1}$-approximation of $\bar B_j$.


\textbf{Stage (3)}: Let $P_j$ be as defined in Stage~(1). For ease of notation, we will just write $\bar P_j \circ s_j$, $P_j \circ s_j$ as $\bar P_j$, $P_j$; this  will not cause confusion  since $\bar P_j$, $P_j$, and $s_j$ will always occur in these composite forms. Let $P = \bigcup_j P_j (b)$. We claim that  $P$ is a $T_{n-1}$-approximation  of $A$. To show this, we proceed by induction on $n$. Let us begin by clarifying a bit what the inductive hypothesis is.
\begin{enumerate}[leftmargin=*]
  \item The claim holds for all $\lan{T}{RV}{}$-definable sets $A$ and all cell decompositions of $A$ of the special form described above, in particular, for the set $A^i$ and the cells $(B^i_j)_j$.
  \item The claim holds for all hypogenous theories $T$ and all power progressions at $T$, in particular, for any subsequence $(T_i, T_{i+1}, \ldots)$ of $(T_i)_i$, since it is a power progression at the hypogenous theory $T_i$.
  \item Finally, as we have mentioned earlier, there is also a quantification over all growth representatives for a power progression, in particular, for any subsequence  $(\zeta_{i}, \zeta_{i+1}, \ldots)$ of $(\zeta_{i})_i$, which is a growth representative for $(T_i, T_{i+1}, \ldots)$.
\end{enumerate}

From here on fix any $\usu$. The base case $n=1$ is essentially Example~\ref{exam:approx}. Note that  here we already need the second  modification in the construction of $\bar P_j$ described in Stage~(1) if $A$ is a union of two (nonempty) cells, for otherwise there will be a ``gap'' between $\bar P_1(u)$ and $\bar P_2(u)$, namely $u$ itself.

For the inductive step, since $P_j$ is a $T_{n-1}$-approximation of $B_j$, it is clear that if $V$ is an $\lan{T}{}{}$-definable set containing $A$ then $P(u) \sub V$. The more difficult task is to show that if $V\sub A$ then $V \sub P(u)$. To that end, without loss of generality, we may assume that $V$ is a cell. We claim that the functions $P^{n-1}_j : b \efun \pr_{<n}(P_j(b))$ indeed form a $T_{n-1}$-approximation $P^{n-1} : b \efun \bigcup_j P^{n-1}_j(b)$ of $A^{n-1}$ (note that this is not a $T_{n-2}$-approximation). To see this, observe that the box $\pr_{<n}(\bar P_{j}(b))$ is determined by the  points
\[
(\dot \zeta_{j,1} \circ s_j) (b), \ldots, (\dot \zeta_{j,n-1} \circ s_j) (b),
\]
and hence the sequence of functions $b \efun \pr_{<n}(\bar P_{j}(b))$ can be constructed by applying the inductive procedure in Stage~(2)  to the $\vv$-boxes $\bar B^{n-1}_j$, using instead the growth representative $(\zeta_1, \zeta_2, \ldots)$ and the function $l^{n-1}$ defined in Notation~\ref{tree} (more formally, in light of the last sentence of Notation~\ref{tree}, this can be verified through a simple induction). Since $\sigma_j^{n-1}(B_j^{n-1}) = \bar B_j^{n-1}$,  the inductive hypothesis  yields that $P^{n-1}$  is a $T_{n-1}$-approximation of $A^{n-1}$ and hence $V^{n-1} \coloneqq \pr_{<n}(V) \sub P^{n-1}(u)$.


We concentrate on one cell  $B^{n-1} \sub A^{n-1}$ with $B^{n-1} \cap V^{n-1} \neq \0$. There are at most two cells $B_j$ with $B_j \sub A$ and $B_j^{n-1} = B^{n-1}$. After re-enumeration if necessary, we may assume that these cells are $B_1$ and, if it exists, $B_2$.  Let
\[
V^{n-1}(b)  = P^{n-1}_1(b) \cap V^{n-1}, \quad \text{for } b \in \VF^+.
\]
By Lemma~\ref{in:out:same}, $V^{n-1}(u) \neq \0$ is equivalent to $B^{n-1} \cap V^{n-1} \neq \0$. It is enough to show that, for all $a \in V^{n-1}(u)$, $V_a \sub P_1(u)_a \cup P_2(u)_a$. To that end, observe that we can actually apply $\sigma_1$ (which equals $\sigma_2$ if $\sigma_2$ exists) and show this   property for $\sigma_1(A)$, $\sigma_1(V)$, etc. So we may assume that $\sigma_1 = \id$, in particular, $B_1$, $B_2$ are $\vv$-boxes whose last coordinates are either $\MM^+$ or $(\MM, +\infty)$, and $P_1(b)$, $P_2(b)$ are boxes for all $b \in \VF^+$. There are three cases to consider.

\textbf{Stage (4)}: The first case is that  $B_2$ does not exist and the last coordinate $\pr_n(B_1)$ of $B_{1}$ is $\VF^+$. Then  the property is clear since $V$ is certainly contained in $C$ and the last coordinate of $P_1(u)$ is also $\VF^+$.

Similarly, if $B_2$ does exist then, by the construction, the last coordinates of $P_1(u)$, $P_2(u)$ are the intervals $(0, (\dot \zeta_{1,0} \circ s_1)(u)]$, $((\dot \zeta_{1,0} \circ s_1)(u), +\infty)$, and hence the property also holds in this case. Note that here $s_1$ is not necessarily $\id$, since we have re-enumerated $(P_j)_j$ so as to simplify the notation. Actually this proof seems to be more complicated than it ought to be primarily because of this case. More specifically, this case is  the reason why the approximations $\bar P_j$ are constructed backwards along the coordinates instead of starting with the first coordinate as in the proof of Lemma~\ref{LT:lim}, for otherwise the opposing ends of the intervals in the last coordinates of $P_1(u)$, $P_2(u)$ are defined using different functions, one $T_{n-1}$-infinite and the other infinite-$T_{n-1}$-constant, which would leave a gap between the two intervals.

The third case is that  $B_2$ does not exist but $\pr_n(B_1)$ is not $\VF^+$. There are two subcases to consider. First suppose that $\pr_n(B_1)$ is $(\MM, + \infty)$. If the $i$th coordinate  of $B^{n-1}$ is   $(\MM, +\infty)$ then the $i$th coordinate of $P^{n-1}_1(u)$ is an open interval $I_u$ contained in   $(\MM, +\infty)$, otherwise it is a half-closed interval $H_u \sub \VF^+$ containing $\MM^+$. For  $e \in \VF^+$, we replace, in $P^{n-1}_1(u)$, the intervals $I_u$ with $(u, +\infty)$ and  the intervals $H_u$ with $(0, e)$, and call the resulting box $Q(u, e)$, which is contained in $B^{n-1}$ if and only if $e \in \MM^+$.  Let
\[
V^{n-1}(u, e)  = Q(u, e) \cap V^{n-1},
\]
which cannot be empty if $e \in (\MM, +\infty)$, because $\usu$ and $B^{n-1} \cap V^{n-1} \neq \0$. Then there is a $u$-$\lan{T}{}{}$-definable function $\rho_u: \VF^+ \fun \VF^+_{0} \cup \{+\infty\}$ such that $\rho_u(e)$ is the largest element satisfying the condition
\begin{equation}\label{u:condish}
 V(u,e) \coloneqq \bigcup_{a \in V^{n-1}(u, e)} V_a   \sub [2\rho_u(e), +\infty) \times Q(u, e),
\end{equation}
where $\rho_u(e) = +\infty$ if and only if $V^{n-1}(u, e)$ is empty. Here the factor 2 is just a simple device to guarantee $V(u,e) \sub (\rho_u(e), +\infty) \times Q(u, e)$, which is what we actually need. So if $e \in \MM^+$ then $\rho_u(e) > \MM$. Note that $\rho_u$ is not increasing, and hence there is a $u$-definable $e_u \in \UU^+$ such that
\[
+\infty > \rho_u(e_u) > \MM \dand V(u,e_u) \sub  (\rho_u(e_u), +\infty) \times Q(u, e_u).
\]
Near $\MM$, we may speak of the $\lan{T}{RV}{}$-definable functions $\tau: \UU^+ \fun \UU^+$ given by $u \efun e_u$ and $\rho : \UU^+ \fun (\MM, +\infty)$ given by $u \efun \rho_u(e_u)$. Suppose that  $\rho$ admits an inverse  $\rho^{-1}: \UU^+ \fun \UU^+$ near $\MM$. Since
\begin{itemize}[leftmargin=*]
  \item $(\dot \zeta_{1,(n-i)} \circ s_1)(u) > (\rho^{-1} \circ \dot \zeta_{1,0} \circ s_1)(u)$ if the $i$th coordinate of $B^{n-1}$ is $(\MM, +\infty)$ and
  \item $(\dot \zeta_{1,(n-i)} \circ s_1)(u) < (\tau \circ \rho^{-1} \circ \dot \zeta_{1,0} \circ s_1)(u)$  if the $i$th coordinate of $B^{n-1}$ is $\MM^+$,
\end{itemize}
it follows that the box $P^{n-1}_1(u)$ is contained in the box
\[
Q((\rho^{-1} \circ \dot \zeta_{1,0} \circ s_1)(u), (\tau \circ \rho^{-1} \circ \dot \zeta_{1,0} \circ s_1)(u)).
\]
Since $(\rho \circ \rho^{-1} \circ \dot \zeta_{1,0} \circ s_1)(u) = (\dot \zeta_{1,0} \circ s_1)(u)$, we see that $V_a   \sub ((\dot \zeta_{1,0} \circ s_1)(u), +\infty)$ for all $a \in V^{n-1}(u)$ as desired. Now, if $\rho$ does not admit an inverse near $\MM$, that is, if there is a definable $e^* \in \UU^+$ such that $\rho(u) > e^*$ for all $\usu$,  then we can simply replace $\rho$ with an  $\lan{T}{RV}{}$-definable function $\UU^+ \fun \UU^+$ that does admit an inverse near $\MM$, and since lowering  $\rho$ is conducive to our purpose, we are back in the situation above. Note that if no coordinate  of $B^{n-1}$ is $\MM^+$ then the entire construction is simpler since the function $\tau$ will not even appear.

Finally, the subcase that $\pr_n(B_1)$ is $\MM^+$ is similar. We just point out the necessary modifications. The function $\rho_u$ is so defined that $\rho_u(e)$ is the least that satisfies the condition (\ref{u:condish}), where of course there is no need for the factor $2$ anymore and $[2\rho_u(e), +\infty)$ is simply replaced by $(0, \rho_u(e)]$, and $\rho_u(e) = 0$ if and only if $V^{n-1}(u, e)$ is empty. So $\rho_u$ is not decreasing and if $e \in \MM^+$ then $\rho_u(e) \in \MM^+_0$ as well. Since $\rho_u$ is $u$-$\lan{T}{}{}$-definable, there is an $\lan{T}{RV}{}$-definable function $\tau : \UU^+ \fun \UU^+$ near $\MM$ such that $\rho(u) \coloneqq \rho_u(\tau(u)) \in \OO^+$. So either  $\rho(u) \in \MM^+$ for all $\usu$ or $\rho(u) \in \UU^+$ for all $\usu$. It is not hard  to see that if the second possibility occurs then $\tau$ may be adjusted so that the function $\rho$ admits an inverse near $\MM$; in fact, the construction above suggests that raising $\rho$ near $\MM$ is conducive to our purpose and hence we may assume that this condition on $\rho$  holds in either case. At this point, the same argument goes through.
\end{proof}

\begin{rem}
If $A' \sub A$ is another $\lan{T}{RV}{}$-definable set then we can find  $T_{n-1}$-approximations $P$, $Q$ of $A$, $A'$, respectively, with  $Q(u) \sub P(u)$. To see this, recall that we started with an arbitrary cell decomposition $(A^n_i)_i$ of $A$ and then proceeded to construct this  cell decomposition $(B_j)_j$. We can certainly make all this compatible with $A'$, that is, $A'$ is also a union of some of these cells $B_j$. The proof of Theorem~\ref{approx:ex} constructs inductively an $T_{n-1}$-approximation of each $B_j$ and also specifies a way to stitch them together to form a $T_{n-1}$-approximation of $A$. This procedure of course depends on an enumeration of $(B_j)_j$ (see Remark~\ref{tree}). But then we can enumerate those cells $B_j$ contained in $A'$ first. Consequently, the construction  yields  $T_{n-1}$-approximations $P$, $Q$ of $A$, $A'$ as desired, more precisely, $P(b) = \bigcup_j P_j (s_j(b))$ and $Q(b) = \bigcup_{1 \leq j \leq k} P_j (s_j(b))$ for some $k$.
\end{rem}

\providecommand{\bysame}{\leavevmode\hbox to3em{\hrulefill}\thinspace}
\providecommand{\MR}{\relax\ifhmode\unskip\space\fi MR }
\providecommand{\MRhref}[2]{%
  \href{http://www.ams.org/mathscinet-getitem?mr=#1}{#2}
}
\providecommand{\href}[2]{#2}

%

\end{document}